\colorlet{darkblue}{blue!50!black}
\newcommand{\nnorm}[2]{|\!|\!|#1 |\!|\!|_{#2}}
\newcommand{\esssup}{\mathop{\rm ess\ sup}}
\newcommand{\p}{\partial}
\newcommand{\e}{\varepsilon}
\newcommand{\R}{{\mathbb R}}
\newcommand{\CC}{{\cal C}}
\newcommand{\EE}{{\cal E}}
\newcommand{\FF}{{\cal F}}
\newcommand{\KK}{{\cal K}}
\newcommand{\LL}{{\cal L}}
\newcommand{\RR}{{\cal R}}
\newcommand{\sS}{{\cal S}}
\newcommand{\XX}{{\cal X}}
\newcommand{\fff}{{\mathbf f}}
\theoremstyle{plain}
\newtheorem{theorem}{Theorem}[section]
\newtheorem{lemma}[theorem]{Lemma}
\newtheorem{proposition}[theorem]{Proposition}
\theoremstyle{definition}
\newtheorem{definition}[theorem]{Definition}
\newtheorem*{definition*}{Definition}
\theoremstyle{remark}
\newtheorem{remark}[theorem]{Remark}
\numberwithin{equation}{section}
\begin{document}
\author{Armen Shirikyan} 
\title{Control theory for the Burgers equation: Agrachev--Sarychev approach}
\date{\small Department of Mathematics, University of Cergy--Pontoise, 
CNRS UMR 8088\\ 
2 avenue Adolphe Chauvin, 95302 Cergy--Pontoise, France\\[4pt]
Centre de Recherches Math\'ematiques, CNRS UMI 3457\\
Universit\'e de Montr\'eal, Montr\'eal,  QC, H3C 3J7, Canada\\[4pt]
E-mail: \href{mailto:Armen.Shirikyan@u-cergy.fr}{Armen.Shirikyan@u-cergy.fr}}
\maketitle

\hfill{\sl To Professor Viorel Barbu, on the occasion of his $75^\text{th}$ birthday}

\bigskip
\begin{abstract}
This paper is devoted to a description of a general approach introduced by Agra\-chev and Sarychev in~2005 for studying some control problems for Navier--Stokes equations. The example of a 1D Burgers equation is used to illustrate the main ideas. We begin with a short discussion of the Cauchy problem and establish a continuity property for the resolving operator. We next turn to the property of approximate controllability and prove that it can be achieved by a two-dimensional external force. Finally, we investigate a stronger property, when the approximate controllability and the exact controllability of finite-dimensional functionals are proved simultaneously.

\bigskip
\noindent
{\bf AMS subject classifications:} 35Q35, 93B05, 93C20

\smallskip
\noindent
{\bf Keywords:} Burgers equation, approximate controllability, exact controllability of functionals, Agrachev--Sarychev method
\end{abstract}
\tableofcontents
\setcounter{section}{-1}

\section{Introduction}
\label{s0}
In the paper~\cite{AS-2005}, Agrachev and Sarychev introduced a new approach for investigating the controllability of nonlinear PDEs. They studied the 2D Navier--Stokes equations on a torus controlled by a finite-dimensional external force and proved the properties of approximate controllability and exact controllability in finite-dimensional  projections. These results were later extended to the Euler and Navier--Stokes systems on various 2D manifolds; see~\cite{AS-2006,rodrigues-2006,AS-2008}. 

The Agrachev--Sarychev approach was developed in many works, and similar controllability results were established  for a number of nonlinear PDEs, including some equations for which the well-posedness of the Cauchy problem is not known to hold. Namely, the 3D Navier--Stokes equations were studied in~\cite{shirikyan-CMP2006,shirikyan-AIHP2007}, Nersisyan~\cite{nersisyan-2010,nersisyan-2011} investigated the 3D incompressible and compressible Euler systems, and Sarychev~\cite{sarychev-2012} studied the cubic Schr\"o\-dinger equation on a  2D torus. The Lagrangian (approximate) controllability of the 3D Navier--Stokes equations was proved by Nersesyan~\cite{nersesyan-2015}, and the approximate controllability of the 1D Burgers equation with no decay condition at infinity was established in~\cite{shirikyan-2014}. 

Let us mention that there is enormous literature on the problem of controllability for nonlinear PDEs (e.g., see the books~\cite{fursikov2000,coron2007,BC2016} and the references therein). However, we do not discuss those works here, since our main focus is the Agrachev--Sarychev approach. We shall give a concise self-contained account of their method, using the example of  the 1D Burgers equation
\begin{equation} \label{2.1}
\p_t u-\nu\p_x^2u+u\,\p_xu=h(t,x)+\eta(t,x), \quad x\in (0,\pi),
\end{equation}
where $\nu>0$ is a fixed parameter, $h$ is a given function, and~$\eta$ is a control. Equation~\eqref{2.1} is supplemented with the Dirichlet boundary condition and an initial condition at $t=0$. It will be proved that, given any $L^2$ function~$\hat u$ and a continuous mapping $F:L^2\to\R^N$ that possesses a right inverse on a ball centred at~$F(\hat u)$, any initial point can be steered to an arbitrary small neighbourhood of~$\hat u$ in such a way that the value of~$F$ on the solution coincides with~$F(\hat u)$; see Section~\ref{s3} for the exact formulation. Finally, let us emphasise that the goal of this  paper is to illustrate the Agrachev--Sarychev method on a simple example, and we do not aim at doing it under the most general hypotheses; the results presented in this paper can certainly be extended in many directions. 

\smallskip
The paper is organised as follows. In Section~\ref{s1}, we recall a well-posedness result for the Burgers equation and establish some estimates and continuity properties for the resolving operator. Section~\ref{s2} is devoted to the problem of approximate controllability. We formulate the result and give its detailed proof. In Section~\ref{s3}, we establish the main result of the paper, extending the property of approximate controllability. The appendix gathers some auxiliary assertions used in the main text. 

\smallskip
{\bf Acknowledgements.} 
This paper is an extended version of the lectures on the control theory delivered at the University of Ia\c si in 2010, and it is my pleasure to thank V.~Barbu, T.~Hav\^arneanu, and C.~Popa for invitation and excellent working conditions. This research was carried out within the MME-DII Center of Excellence (ANR-11-LABX-0023-01) and supported by {\it Initiative d'excellence Paris-Seine\/}, the CNRS PICS {\it Fluctuation theorems in stochastic systems\/}, and {\it Agence Nationale de la Recherche\/} through  the grant ANR-17-CE40-0006-02.
 
\subsection*{Notation}
We write $I=[0,\pi]$ and $J_t=[0,t]$ for $t>0$. For a closed interval $J\subset\R$ and a Banach space~$X$, we shall use the following functional spaces.

\medskip
\noindent
$L^2=L^2(I)$ is the space of square-integrable measurable functions $u:I\to\R$; the corresponding norm and inner product are denoted by~$\|\cdot\|$ and~$(\cdot,\cdot)$.

\smallskip
\noindent
$H^s=H^s(I)$ denotes the Sobolev space of order~$s$ on the interval~$I$ with the standard norm~$\|\cdot\|_s$.

\smallskip
\noindent
$H_0^s=H_0^s(I)$ stands for the closure in~$H^s$ of the space of infinitely smooth functions with compact support. 

\smallskip
\noindent
$C(J,X)$ denotes the space of bounded continuous functions $u:J\to X$.

\smallskip
\noindent
$L^p(J,X)$ is the space of Borel-measurable functions $u:J\to X$ such that 
$$
\|u\|_{L^p(J,X)}=\biggl(\int_J\|u(t)\|_X^pdt\biggr)^{1/p}<\infty\,;
$$
in the case $p=\infty$, this norm should be replaced by $\|u\|_{L^\infty(J,X)}=\esssup_{t\in J}\|u(t)\|_X$.

\smallskip
\noindent
We denote $\XX(J)=C(J,L^2)\cap L^2(J,H_0^1)$. In the case $J=J_T$, we shall write~$\XX_T$.

\smallskip
\noindent
$\LL(X,Y)$ is the space of continuous linear operator from~$X$ to~$Y$. 

\section{Cauchy problem}
\label{s1}
\subsection{Well-posedness}
\label{s1.1}
Let us consider the Burgers equation on the interval $I=[0,\pi]$ with the Dirichlet boundary condition:
\begin{align}
\p_t u-\nu\p_x^2u+u\p_x u&=f(t,x), \label{1.1}\\
u(t,0)=u(t,\pi)&=0. \label{1.2}
\end{align}
Here $u=u(t,x)$ is a real-valued unknown function, $\nu>0$ is a parameter, and~$f$ is a given function. Equations~\eqref{1.1}, \eqref{1.2} are supplemented with the initial condition
\begin{equation} \label{1.3}
u(0,x)=u_0(x). 
\end{equation}
The following theorem establishes the well-posedness of the Cauchy problem for the Burgers equation in an appropriate functional space. 

\begin{theorem} \label{t1.1}
Let $T$ and~$\nu$ be some positive numbers. Then, for any $u_0\in L^2$ and $f\in L^1(J_T,L^2)$, there is a unique function $u\in \XX_T$ that satisfies \eqref{1.1}--\eqref{1.3}. 
\end{theorem}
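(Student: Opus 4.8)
The plan is to construct the solution by Galerkin approximation, derive uniform a priori bounds placing the approximations in~$\XX_T$, extract a convergent subsequence, and pass to the limit in the weak formulation; uniqueness will then follow from an energy estimate and Gronwall's inequality. I would let $\{e_j\}$ be the $L^2$-orthonormal basis of Dirichlet eigenfunctions of $-\partial_x^2$ on~$I$ (so $e_j\propto\sin(jx)$, $e_j\in H_0^1$), write $P_n$ for the orthogonal projection onto $\linspan\{e_1,\dots,e_n\}$, and seek $u_n(t)=\sum_{j\le n}c_j(t)e_j$ solving $\partial_t u_n-\nu\partial_x^2u_n+P_n(u_n\partial_x u_n)=P_nf$ with $u_n(0)=P_nu_0$. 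This is a finite ODE system with quadratic nonlinearity and $L^1$-in-time forcing, so Carath\'eodory's theorem yields a local solution; the bounds below preclude blow-up and give existence on all of~$J_T$.

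The a priori estimate is the heart of the existence argument, and it hinges on a cancellation special to the Dirichlet problem. Taking the inner product with~$u_n$ and using $(u_n\partial_x u_n,u_n)=\frac13\int_I\partial_x(u_n^3)\,dx=0$ (the boundary terms vanish since $u_n$ vanishes at the endpoints), I obtain $\frac12\frac{d}{dt}\|u_n\|^2+\nu\|\partial_x u_n\|^2=(f,u_n)\le\|f\|\,\|u_n\|$. Because the bound is additive in $\int\|f\|\,dt$, this is exactly suited to $L^1$-in-time forcing and gives $\|u_n(t)\|\le\|u_0\|+\|f\|_{L^1(J_T,L^2)}$ for all~$t$; integrating in time then bounds $\int_0^T\|\partial_x u_n\|^2\,dt$, and by the Poincar\'e inequality this controls $\|u_n\|_{L^2(J_T,H_0^1)}$. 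Thus $\{u_n\}$ is bounded in~$\XX_T$ uniformly in~$n$.

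The main obstacle is passing to the limit in the nonlinear term, which demands strong rather than merely weak convergence. I would bound $\partial_t u_n$ in $L^1(J_T,H^{-1})$: the term $\nu\partial_x^2 u_n$ is bounded in $L^2(J_T,H^{-1})$ by the previous step, $P_nf$ lies in $L^1(J_T,L^2)$, and for the nonlinearity I write $u_n\partial_x u_n=\frac12\partial_x(u_n^2)$ and use the one-dimensional interpolation $L^\infty(J_T,L^2)\cap L^2(J_T,H_0^1)\hookrightarrow L^4(J_T,L^4)$ to see that $u_n^2$ is bounded in $L^2(J_T,L^2)$, hence $\partial_x(u_n^2)$ in $L^2(J_T,H^{-1})$. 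With the compact chain $H_0^1\hookrightarrow\hookrightarrow L^2\hookrightarrow H^{-1}$, the Aubin--Lions--Simon lemma produces a subsequence converging strongly in $L^2(J_T,L^2)$ and weakly in $L^2(J_T,H_0^1)$. This strong convergence is precisely what identifies the limit of $\frac12\partial_x(u_n^2)$ as $\frac12\partial_x(u^2)=u\partial_x u$; the linear terms pass to the limit against a fixed test function trivially, and membership $u\in C(J_T,L^2)$ follows from $u\in L^2(J_T,H_0^1)$ together with the bound on $\partial_t u$ via a standard embedding (the $L^1(J_T,L^2)$ part of the forcing being accommodated through the Duhamel representation of~$u$).

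For uniqueness, let $u,v\in\XX_T$ be two solutions and set $w=u-v$. Subtracting the equations and using $u\partial_x u-v\partial_x v=\frac12\partial_x\bigl(w(u+v)\bigr)$, I take the inner product with~$w$ to get $\frac12\frac{d}{dt}\|w\|^2+\nu\|\partial_x w\|^2=\frac12\bigl(w(u+v),\partial_x w\bigr)$. Estimating the right-hand side with the one-dimensional Agmon inequality $\|w\|_{L^\infty}\le C\|w\|^{1/2}\|\partial_x w\|^{1/2}$ and Young's inequality absorbs a $\frac{\nu}{2}\|\partial_x w\|^2$ term and leaves $\frac{d}{dt}\|w\|^2\le C'\|u+v\|^4\|w\|^2$. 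Since $u,v\in C(J_T,L^2)$, the coefficient $\|u+v\|^4$ is bounded on~$J_T$, so Gronwall's inequality together with $w(0)=0$ forces $w\equiv0$. I expect the compactness step of the third paragraph to be the genuinely delicate point, the remaining estimates being standard once the endpoint cancellation is exploited.
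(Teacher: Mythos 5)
Your proposal is correct, and it actually proves more than the paper does: the paper explicitly confines itself to the formal a priori estimate and to uniqueness, deferring the existence machinery to \cite{lions1969,taylor1996}, whereas you supply that machinery via Galerkin approximation, the $L^4(J_T,L^4)$ bound on $u_n$, and the Aubin--Lions--Simon compactness lemma (correctly invoked in the version that tolerates $\p_t u_n$ bounded only in $L^1(J_T,H^{-1})$). Your energy estimate is the same as the paper's, resting on the cancellation $(u\p_xu,u)=0$ and the additivity in $\int\|f\|\,dt$ that makes $L^1$-in-time forcing harmless. The uniqueness arguments differ only in bookkeeping: you symmetrise the nonlinearity as $\tfrac12\p_x\bigl(w(u+v)\bigr)$ and use Agmon plus Young to absorb $\tfrac{\nu}{2}\|\p_xw\|^2$, ending with the Gronwall coefficient $\|u+v\|^4$, bounded since $u,v\in C(J_T,L^2)$; the paper instead keeps the asymmetric form $u\p_xu_1+u_2\p_xu$ and arrives at an integral Gronwall inequality with coefficient $g^2\in L^1(J_T)$, $g=C\|\p_xu_2-2\p_xu_1\|$. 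Both are the standard energy method and both close. The one place where your sketch is thinnest is the continuity $u\in C(J_T,L^2)$: the Lions--Magenes lemma needs $\p_tu\in L^2(J_T,H^{-1})$, which fails here because of the $L^1(J_T,L^2)$ forcing, but the Duhamel splitting you gesture at (semigroup convolution with the $L^1(J_T,L^2)$ part, parabolic regularity for the $L^2(J_T,H^{-1})$ part) is exactly the right repair.
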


\begin{proof}
We confine ourselves to a formal derivation of an a priori estimate for solutions and to the proof of uniqueness of solution. A detailed account of initial--boundary value problems for some non-linear PDEs can be found in~\cite{lions1969,taylor1996}. 

{\it A priori estimate\/}.
Let us set 
$$
\EE_u(t)=\|u(t)\|^2+2\nu\int_0^t\|\p_xu(s)\|^2ds. 
$$
We multiply Eq.~\eqref{1.1} by~$2u$ and integrate over $I\times J_r$. After some simple transformations, we get
\begin{align*}
\EE_u(r)&=\|u_0\|^2+2\int_0^r\bigl(f(s),u(s)\bigr)\,ds\\
&\le \|u_0\|^2+2\,\|f\|_{L^1(J_r,L^2)}\Bigl(\,\sup_{0\le s\le r}\|u(s)\|\Bigr).
\end{align*}
Taking the supremum over $r\in[0,t]$, we see that
\begin{equation} \label{1.4}
\EE_u(t)\le 2\|u_0\|^2+4\,\|f\|_{L^1(J_t,L^2)}^2\quad\mbox{for $0\le t\le T$}. 
\end{equation}

\smallskip
{\it Uniqueness\/}. If $u_1,u_2\in\XX_T$ are two solutions, then the difference $u=u_1-u_2$ satisfies the equation
$$
\p_t u-\nu\p_x^2u+u\p_x u_1+u_2\p_xu=0.
$$
Multiplying this equation by~$2u$, integrating over $I\times J_t$, and using the relations 
\begin{align*}
	(u_2\p_xu,2u)=-(\p_xu_2,u^2),\quad
	\|u^2\|\le \|u\|_{L^\infty}\|u\|\le C\,\|u\|_{H^1}\|u\|, 
\end{align*}
we derive
\begin{align*}
\EE_u(t)&=\iint_{I\times J_t}u^2(\p_xu_2-2\p_xu_1)\,dxds\\
&\le \int_0^tg(s)\|u(s)\|_{H^1}\|u(s)\|ds\\
&\le \|u\|_{L^2(J_t,H^1)}\biggl(\int_0^tg^2(s)\|u(s)\|^2ds\biggr)^{1/2},
\end{align*}
where $g(t)=C\,\|\p_xu_2-2\p_xu_1\|$ is an $L^2$ function of time. Estimating $\|u(t)\|$ and $\|u\|_{L^2(J_t,H^1)}$ by $\sqrt{\EE_u(t)}$, it follows that 
$$
\EE_u(t)\le (2\nu)^{-1}\int_0^tg^2(s)\EE_u(s)\,ds.
$$
Applying the Gronwall inequality, we conclude that $u\equiv0$. 
\end{proof}

\begin{remark} \label{r1.2}
Let us denote by~$\RR:L^2\times L^1(J_T,L^2)\to \XX_T$ the resolving operator for problem \eqref{1.1}--\eqref{1.3}, that is, a non-linear mapping that takes a pair~$(u_0,f)$ to the solution $u\in\XX_T$. Using rather standard techniques (e.g., see the book~\cite{taylor1996} and the references therein), one can prove that~$\RR$ is uniformly Lipschitz continuous on bounded subsets. Moreover, the same property is true  when $L^1(J_T,L^2)$ is replaced by $L^2(J_T,H^{-1})$.
\end{remark}

\begin{remark} \label{r1.3}
The above-mentioned results are valid in a slightly more general setting. Namely, let us consider the equation 
\begin{equation} \label{1.0}
\p_t u-\nu\p_x^2(u+w)+(u+v)\,\p_x(u+v)=f(t,x), \quad x\in (0,\pi), 
\end{equation}
supplemented with the initial--boundary conditions~\eqref{1.2} and~\eqref{1.3}. One can prove that, for any $u_0\in L^2$ and any functions
$$
v\in\XX_T+L^2(J_T,H^2), \quad w\in L^1(J_T,H^2),\quad
f\in L^1(J_T,L^2)+L^2(J_T,H^{-1}),
$$  
problem~\eqref{1.0}, \eqref{1.2}, \eqref{1.3} has a unique solution $u\in\XX_T$, and the associated resolving operator that takes $(v,w,f,u_0)$ to~$u$ is uniformly Lipschitz continuous on bounded subsets. 
\end{remark}

In what follows, we denote by $\RR_t(u_0,f)$ the restriction of~$\RR(u_0,f)$ at time~$t$. That is, $\RR_t$~takes $(u_0,f)$ to~$u(t)$, where~$u(t,x)$ is the solution of~\eqref{1.1}--\eqref{1.3}. 

\subsection{Continuity of the resolving operator in the relaxation norm}
\label{s1.2}
In the previous subsection, we discussed the existence and uniqueness of solution for problem~\eqref{1.1}--\eqref{1.3} and the Lipschitz continuity of the resolving operator. It turns out that the latter property remains true if the right-hand side is endowed with a weaker norm in~$t$ and a stronger norm in~$x$. Namely, define the {\it relaxation norm\/}
\begin{equation} \label{1.5}
\nnorm{f}{s}=\sup_{t\in J_T}\,\biggl\|\int_0^tf(r)\,dr\biggr\|_{H^s}
\end{equation}
on the space $L^1(J_T,H^s)$ and denote by~$B_s(R)$ the set of functions $f\in L^1(J_T,H^s)$ such that $\nnorm{f}{s}\le R$. 

\begin{proposition} \label{p1.3}
For any positive numbers~$R$ and~$T$, there is $C>0$ such that
\begin{equation} \label{1.6}
\|\RR(u_{01},f_1)-\RR(u_{02},f_2)\|_{\XX_T}
\le C\bigl(\|u_{01}-u_{02}\|+\nnorm{f_1-f_2}{1}\bigr),
\end{equation}
where $u_{01}, u_{02}\in B_{L^2}(R)$ and $f_1,f_2\in B_1(R)$ are arbitrary functions.  
\end{proposition}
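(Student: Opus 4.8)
The plan is to exploit the relaxation norm through the substitution that moves the time-integration off the source. Writing $g(t)=\int_0^t\bigl(f_1(r)-f_2(r)\bigr)\,dr$, so that $\nnorm{f_1-f_2}{1}=\sup_{t\in J_T}\|g(t)\|_{H^1}$ and $\p_tg=f_1-f_2$, I would not estimate $u:=\RR(u_{01},f_1)-\RR(u_{02},f_2)$ directly but rather the shifted function $w:=u-g$, which solves an equation in which the troublesome source $f_1-f_2$ is replaced by the term $\nu\,\p_x^2g$. The point is that in the energy identity this term is paired with $w$ and integrated by parts \emph{once}, giving $-\nu(\p_xg,\p_xw)$, so only the $H^1$-norm of $g$---precisely what the relaxation norm controls---is needed. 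This is also why Remark~\ref{r1.3} cannot be quoted verbatim: absorbing the source into the diffusion term there costs one further derivative ($w\in L^1(J_T,H^2)$), which the relaxation norm does not supply.

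Before comparing two solutions I would first record an a priori bound $\|\RR(u_{0},f)\|_{\XX_T}\le C(R,T)$ valid for all $u_0\in B_{L^2}(R)$ and $f\in B_1(R)$. This step is essential and cannot be skipped, because the relaxation norm does \emph{not} dominate $\|f\|_{L^1(J_T,L^2)}$, so the estimate~\eqref{1.4} of Theorem~\ref{t1.1} is useless here. I would obtain it by the same device, setting $\tilde w=\RR(u_0,f)-g_0$ with $g_0(t)=\int_0^tf$, performing the $L^2$ energy estimate, using $\|g_0(t)\|_{H^1}\le R$ together with the one-dimensional embedding $H^1\hookrightarrow L^\infty$ to bound the convective terms (the cubic term $(\tilde w^2,\p_x\tilde w)$ vanishing), and closing by Gronwall's inequality.

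With the a priori bounds in hand I would turn to $w=u-g$. Rewriting the nonlinearity as $u_1\p_xu_1-u_2\p_xu_2=\tfrac12\p_x\bigl((u_1+u_2)\,u\bigr)$ and carrying out the energy estimate for $w$, the right-hand side produces three kinds of terms: the diffusion term $-\nu(\p_xg,\p_xw)$; the genuine bilinear term, which after integration by parts becomes $(\phi w,\p_xw)$ with $\phi=u_1+u_2$; and the mixed term $(\phi g,\p_xw)$. Each is controlled by Young's inequality after using $\|\phi\|_{L^\infty}\le C\|\phi\|_{H^1}$ and $\|g\|_{L^\infty}\le C\|g\|_{H^1}$, so that a fraction of $\nu\|\p_xw\|^2$ is absorbed into the left-hand side and there remains
\begin{equation*}
\frac{d}{dt}\|w\|^2+\nu\|\p_xw\|^2\le C\,\|\phi\|_{H^1}^2\,\|w\|^2+C\bigl(1+\|\phi\|^2\bigr)\nnorm{f_1-f_2}{1}^2 .
\end{equation*}
Since $\int_0^T\|\phi\|_{H^1}^2\,ds$ and $\sup_{t}\|\phi\|^2$ are bounded by the a priori estimate, Gronwall's inequality yields $\|w\|_{\XX_T}\le C\bigl(\|u_{01}-u_{02}\|+\nnorm{f_1-f_2}{1}\bigr)$, and since $\|g\|_{\XX_T}\le C\sqrt T\,\nnorm{f_1-f_2}{1}$ the bound~\eqref{1.6} follows from $u=w+g$.

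The main obstacle is the nonlinear term: one must extract from it a contribution proportional to $\|w\|^2$ carrying an $L^1(J_T)$-in-time coefficient (so that Gronwall applies) while keeping the spatial derivative $\p_xw$ absorbable, and this is exactly where the a priori $\XX_T$-bound on $u_1+u_2$ and the one-dimensional Sobolev embedding enter. A secondary point requiring care is the bookkeeping of boundary terms in the integrations by parts: these disappear because the solutions lie in $H_0^1$ and, in the setting at hand, the primitive $g$ inherits the Dirichlet condition, so that $w$ indeed belongs to $\XX_T$.
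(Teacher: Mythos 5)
Your overall strategy---shift the difference of solutions by a primitive of $f_1-f_2$ so that only $\sup_t\|\int_0^t(f_1-f_2)\,dr\|_{H^1}$ enters the energy identity, after first securing an a priori $\XX_T$-bound by the same device---is sound in spirit and genuinely different from the paper's proof, which instead represents the contribution of the force through the Dirichlet heat semigroup via $(Kf)(t)=F(t)+\nu\int_0^te^{\nu(t-s)\p_x^2}\p_x^2F(s)\,ds$ and then reduces the nonlinear step to the Lipschitz continuity of the auxiliary resolving operator of Remark~\ref{r1.3}. Your observation that the a priori bound cannot be read off from~\eqref{1.4}, because $\nnorm{\cdot}{1}$ does not dominate $\|\cdot\|_{L^1(J_T,L^2)}$, is correct.

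However, the point you dismiss as ``secondary'' is exactly where the argument breaks. In Proposition~\ref{p1.3} the forces take values in $H^1$, not in $H^1_0$ (the set $B_1(R)$ is the relaxation-norm ball of $L^1(J_T,H^1)$), so the primitive $g(t)=\int_0^t(f_1-f_2)\,dr$ does \emph{not} inherit the Dirichlet condition, and $w=u-g$ does not lie in $L^2(J_T,H_0^1)$, hence not in $\XX_T$. The two linear integrations by parts you rely on are then unavailable: the identity $(\p_x^2w,w)=-\|\p_xw\|^2$ fails (equivalently, $w$ is not an admissible test function in the weak formulation), and $\p_x^2g$ belongs only to $H^{-1}=(H_0^1)^*$, so the pairing $(\p_x^2g,w)$ with $w\notin H_0^1$ is not even defined. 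Correcting $g$ by the affine lift of its boundary values reintroduces a source $\p_t g_b$ whose size is governed by the pointwise values of $f_1-f_2$ at $x=0$ and $x=\pi$, i.e.\ by $\|f_1-f_2\|_{L^1(J_T,H^1)}$ rather than by the relaxation norm, so the obvious repair does not close; the same defect affects your a priori estimate for $\tilde w=\RR(u_0,f)-g_0$. (The nonlinear boundary terms, by contrast, really are harmless, since $\phi=u_1+u_2$ vanishes at the endpoints.) The clean fix is precisely the paper's: replace the raw primitive by $K(f_1-f_2)$, the solution of the heat equation with homogeneous Dirichlet data, which does vanish at the boundary and whose $\XX_T$-norm is controlled by $\nnorm{f_1-f_2}{1}$ thanks to~\eqref{1.9}; with that substitution your energy argument (or a direct appeal to Remark~\ref{r1.3}) goes through.
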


\begin{proof}
We first consider the linear equation
\begin{equation} \label{1.8}
\p_t u-\nu\p_x^2u=f(t,x)
\end{equation}
supplemented with the zero initial and boundary conditions. By Theorem~\ref{t1.1}, this problem has a unique solution $Kf\in\XX_T$ for any $f\in L^1(J_T,H^1)$, which can be written in the form
\begin{equation}\label{1.9}
(Kf)(t)=\int_0^te^{\nu(t-s)\p_x^2}f(s)\,ds
=F(t)+\nu\int_0^te^{\nu(t-s)\p_x^2}\p_x^2F(s)\,ds,
\end{equation}
where we set $F(t)=\int_0^tf(s)\,ds$. The function~$\p_x^2F$ belongs to  $C(J_T,H^{-1})$, and the integral in the right-most term of~\eqref{1.9} is a solution of~\eqref{1.8} with $f=\p_x^2F$. Since the mapping $f\mapsto \p_x^2F$ is continuous from the space~$L^1(J_T,H^1)$ (endowed with the norm~$\nnorm{\cdot}{1}$) to~$L^1(J_T,H^{-1})$, recalling Remark~\ref{r1.2}, we see that the mapping $f\mapsto Kf$ is continuous from~$L^1(J_T,H^1)$ to~$\XX_T$. 

We now turn to the non-linear equation~\eqref{1.1}. Its solution can be written in the form $u=Kf+v$, where~$v\in\XX_T$ is the solution of the problem
$$
\p_t v-\nu\p_x^2v+(v+Kf)\,\p_x(v+Kf)=0, \quad v(0)=u_0.
$$
By Remark~\ref{r1.3}, this problem has a unique solution $v\in\XX_T$. Moreover, $v\in\XX_T$ is a Lipschitz function of the pair~$(u_0,Kf)$ varying in the space~$L^2\times\XX_T$. As was shown above, the mapping $f\mapsto Kf$ is continuous from the space~$L^1(J_T,H^1)$ (with the norm~$\nnorm{\cdot}{1}$) to~$\XX_T$. Hence, we obtain the required Lipschitz-continuity of the mapping $\RR(u_0,f)$. 
\end{proof}

In what follows, we shall need an analogue of Proposition~\ref{p1.3} for Eq.~\eqref{1.0} in the case when the right-hand side is endowed with the weaker norm~$\nnorm{\cdot}{0}$. In this situation, the resolving operator is only 
H\"older continuous in~$f$. The following result is one of the key points of the theory developed in the next two sections. 

\begin{proposition} \label{p1.5}
Let $u_i\in\XX_T$, $i=1,2$ be solutions of problem~\eqref{1.0}, \eqref{1.2}, \eqref{1.3} corresponding to some data $u_{0i}\in L^2$, $v_i,w_i\in L^2(J_T,H^2)$, and $f_i\in L^2(J_T,L^2)$ that belong to the balls of radius~$R$ centred at zero in the corresponding functional spaces. Then there is a constant~$C>0$ depending only on~$R$ and~$T$ such that
\begin{multline} \label{1.10}
\|u_1-u_2\|_{\XX_T}\le C\bigl(\|u_{01}-u_{02}\|+\nnorm{f_1-f_2}{0}^{1/3}\\
+\|v_1-v_2\|_{L^2(J_T,H^2)}+\|w_1-w_2\|_{L^2(J_T,H^2)}\bigr). 
\end{multline}
\end{proposition}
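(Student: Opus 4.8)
The plan is to isolate the contribution of the rough forcing through the linear heat flow, and to show that, by using the two available bounds on $f_1-f_2$ together, this contribution is \emph{small in the strong norm of $\XX_T$}; the leftover then satisfies a forcing-free Burgers-type equation handled by a routine energy estimate. Write $u=u_1-u_2$, $v=v_1-v_2$, $w=w_1-w_2$, $u_0=u_{01}-u_{02}$, $f=f_1-f_2$, and set $\delta=\nnorm{f}{0}$. Subtracting the two copies of \eqref{1.0} and using $a_i\p_xa_i=\tfrac12\p_x(a_i^2)$ with $a_i=u_i+v_i$, the difference solves $\p_tu-\nu\p_x^2u=\nu\p_x^2w-\tfrac12\p_x\!\big(b(u+v)\big)+f$ with $u(0)=u_0$, where $b=u_1+u_2+v_1+v_2$. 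I would let $z=Kf$ be the solution of \eqref{1.8} with zero data (well defined since $f\in L^2(J_T,L^2)\subset L^1(J_T,L^2)$, by Theorem~\ref{t1.1}), and set $\tilde u=u-z$. Then $f$ cancels and $\tilde u\in\XX_T$ solves $\p_t\tilde u-\nu\p_x^2\tilde u=\nu\p_x^2w-\tfrac12\p_x\!\big(b(z+\tilde u+v)\big)$, $\tilde u(0)=u_0$.

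The heart of the matter, and the step I expect to be the main obstacle, is the smallness of $z$; it is exactly here that both hypotheses on $f$ must be combined. Expanding in the Dirichlet eigenbasis $\{e_k\}$, the coefficients $z_k(t)=\int_0^te^{-\nu k^2(t-s)}f_k(s)\,ds$ satisfy, after one integration by parts in time and Young's inequality, $\|z_k\|_{L^2(J_T)}\le 2\|F_k\|_{L^2(J_T)}$, where $F_k$ are the coefficients of $F(t)=\int_0^tf$; summing in $k$ and using $\sup_t\|F(t)\|\le\delta$ gives $\|z\|_{L^2(J_T,L^2)}\le C\delta$. On the other hand, the energy identity for \eqref{1.8} reads $\|z(t)\|^2+2\nu\int_0^t\|\p_xz\|^2=2\int_0^t(f,z)\,ds\le 2\|f\|_{L^2(J_T,L^2)}\|z\|_{L^2(J_T,L^2)}$, and since $\|f\|_{L^2(J_T,L^2)}\le 2R$, the two estimates combine into the crucial bound $\|z\|_{\XX_T}\le C(R)\,\delta^{1/2}$; in particular $\|z\|_{C(J_T,L^2)}\le C(R)\delta^{1/2}$.

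It then remains to run an energy estimate for $\tilde u$. Multiplying its equation by $2\tilde u$ and integrating produces $\EE_{\tilde u}(t)$, the linear term $2\nu\int(\p_x^2w,\tilde u)=-2\nu\int(\p_xw,\p_x\tilde u)$, and the nonlinear term $\int(\p_x\tilde u,b(z+\tilde u+v))$. I would split $b(z+\tilde u+v)=b\tilde u+bz+b_uv+(v_1^2-v_2^2)$ with $b_u=u_1+u_2$, and use the one-dimensional embeddings $\XX_T\hookrightarrow L^2(J_T,L^\infty)$ and $H^2\hookrightarrow L^\infty$, which give $b\in L^2(J_T,L^\infty)$ and $b_u\in C(J_T,L^2)$ with norms $\le C(R)$. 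Then $\int(\p_x\tilde u,b\tilde u)$ is absorbed up to a Gronwall term with weight $\|b\|_{L^\infty}^2\in L^1(J_T)$; $\int(\p_x\tilde u,bz)\le\|z\|_{C(L^2)}\|b\|_{L^2(L^\infty)}\|\p_x\tilde u\|_{L^2(L^2)}$ contributes $O(\delta)$ by the key bound; $\int(\p_x\tilde u,b_uv)\le\|b_u\|_{C(L^2)}\|v\|_{L^2(L^\infty)}\|\p_x\tilde u\|_{L^2(L^2)}$ contributes the $\|v\|_{L^2(J_T,H^2)}$ term; and the purely-$v$ piece $v_1^2-v_2^2$ I would integrate by parts back onto $\tilde u$ and treat as a source in $L^1(J_T,L^2)$, since $\|\p_x(v_1^2-v_2^2)\|_{L^1(J_T,L^2)}\le C(R)\|v\|_{L^2(J_T,H^2)}$, pairing it with $\tilde u\in C(J_T,L^2)$. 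Absorbing the $\e\|\p_x\tilde u\|^2$ contributions and applying Gronwall yields $\|\tilde u\|_{\XX_T}\le C(R)\big(\|u_0\|+\delta^{1/2}+\|v\|_{L^2(J_T,H^2)}+\|w\|_{L^2(J_T,H^2)}\big)$. Finally $\|u\|_{\XX_T}\le\|\tilde u\|_{\XX_T}+\|z\|_{\XX_T}$, and since $\delta$ is bounded on the balls in question one has $\delta^{1/2}\le C\delta^{1/3}$, which gives \eqref{1.10}. (This route in fact delivers the exponent $1/2$; the stated exponent $1/3$ follows a fortiori.)
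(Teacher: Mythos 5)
Your proof is correct, and while it shares the paper's overall skeleton---split off the contribution $K(f_1-f_2)$ of the rough forcing through the heat flow \eqref{1.9} and treat the remainder as a forcing-free Burgers-type stability problem---the way you establish the key smallness of $Kf$ is genuinely different and in fact sharper. The paper proves $\|Kf\|_{\XX_T}\le C\nnorm{f}{0}^{1/3}$ (inequality \eqref{2.31}) by combining the uniform bounds $\|Kf\|_{C(J_T,H^1)}+\|Kf\|_{L^2(J_T,H^2)}\le C$ with interpolation between $H^{-1}$ and $H^1$, $H^2$ and the semigroup estimate $\|\p_x^2e^{\tau\p_x^2}\|_{\LL(L^2,H^{-1})}\le C\tau^{-1/2}$, which controls $\|Kf\|_{C(J_T,H^{-1})}$ by $\nnorm{f}{0}$; the exponent $1/3$ is what the $L^2(J_T,H^1)$ component of the $\XX_T$-norm then yields. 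You instead prove $\|Kf\|_{L^2(J_T,L^2)}\le C\nnorm{f}{0}$ by integration by parts in the Dirichlet eigenbasis plus Young's inequality, and feed this into the energy identity
$\|Kf(t)\|^2+2\nu\int_0^t\|\p_xKf\|^2ds=2\int_0^t(f,Kf)\,ds\le 2\|f\|_{L^2(J_T,L^2)}\|Kf\|_{L^2(J_T,L^2)}$,
which controls both components of the $\XX_T$-norm at once and gives the better exponent $\|Kf\|_{\XX_T}\le C(R)\nnorm{f}{0}^{1/2}$; since $\nnorm{f}{0}$ is bounded on the balls in question, the stated $1/3$ follows a fortiori. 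The remaining difference is that where the paper invokes the Lipschitz continuity of the resolving operator of \eqref{1.0} (Remark~\ref{r1.3}, itself stated without proof) to convert $\|Kf_1-Kf_2\|_{\XX_T}$ into $\|u_1-u_2\|_{\XX_T}$, you carry out the corresponding energy/Gronwall estimate explicitly; your handling of the terms $b\tilde u$, $bz$, $b_uv$ and $v_1^2-v_2^2$ is sound (it rests only on $H^1(I)\hookrightarrow L^\infty(I)$ and the a priori bound of $\|u_i\|_{\XX_T}$ by a constant depending on $R$), so your route is more self-contained, at the cost of redoing by hand a stability estimate the paper takes for granted.
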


\begin{proof}
Let us represent a solution $u$ of Eq.~\eqref{1.0} in the form $u=Kf+\tilde u$,
where the linear operator~$K$ is defined in the proof of Proposition~\ref{p1.3} (see~\eqref{1.9}). Then~$\tilde u$ must satisfy the equation
$$
\p_tu-\nu\p_x^2(u+w)+(u+v+Kf)\,\p_x(u+v+Kf)=0
$$
and the initial--boundary conditions~\eqref{1.2}, \eqref{1.3}. Therefore, applying Remark~\ref{r1.3}, we see that 
\begin{multline*} 
\|\tilde u_1- \tilde u_2\|_{\XX_T}\le C\bigl(\|u_{01}-u_{02}\|
+\|Kf_1-Kf_2\|_{\XX_T}\\
+\|v_1-v_2\|_{L^2(J_T,H^2)}+\|w_1-w_2\|_{L^2(J_T,H^2)}\bigr). 
\end{multline*}
Thus, the required inequality~\eqref{1.10} will be established if we prove that, for any~$R$ and~$T$, there is a constant~$C_1>0$ such that 
\begin{equation} \label{2.31}
\|Kf\|_{\XX_T}\le C_1\nnorm{f}{0}^{1/3},
\end{equation}
where $f\in L^2(J_T,L^2)$ is an arbitrary function whose norm is bounded by~$R$. 

To this end, note that 
\begin{equation} \label{1.12}
\|Kf\|_{C(J_T,H^1)}+\|Kf\|_{L^2(J_T,H^2)}\le C_2. 
\end{equation}
Furthermore, we have the interpolation inequalities
$$
\|z\|\le C_3\|z\|_1^{1/2}\|z\|_{-1}^{1/2},
\quad \|z\|_1\le C_3\|z\|_2^{2/3}\|z\|_{-1}^{1/3}, 
\quad z\in H^2\cap H_0^1.
$$
Combining this with~\eqref{1.12}, we obtain
\begin{align*}
\|Kf\|_{\XX_T}&= \|Kf\|_{C(J_T,L^2)}+\|Kf\|_{L^2(J_T,H^1)}\\
&\le C_4\Bigl(\|Kf\|_{C(J_T,H^{-1})}^{1/2}+\|Kf\|_{L^2(J_T,H^{-1})}^{1/3}\Bigr).
\end{align*}
Thus, to prove~\eqref{2.31}, it suffices to show that
$$
\|Kf\|_{C(J_T,H^{-1})}\le C_5\nnorm{f}{0}.
$$
This is a consequence of~\eqref{1.9} and the inequality $\|\p_x^2e^{\tau \p_x^2}\|_{\LL(L^2,H^{-1})}\le C_6\tau^{-1/2}$, which is true for $\tau>0$. The proof is complete. 
\end{proof}

\section{Approximate controllability}
\label{s2}
\subsection{Formulation of the result and scheme of its proof}
\label{s2.1}
Let us consider Eq.~\eqref{2.1}, in which $h\in L_{\rm loc}^1(\R_+,L^2)$ is a given function and~$\eta$ is a control. We fix an arbitrary number~$T>0$ and a subspace $E\subset L^2$.

\begin{definition} \label{d2.1}
We shall say that Eq.~\eqref{2.1}  is {\it approximately controllable at time~$T$ by an $E$-valued control\/} if for any $u_0,\hat u\in L^2$ and any $\e>0$ there is $\eta\in L^2(J_T,E)$ such that 
\begin{equation} \label{2.2}
\|\RR_T(u_0,h+\eta)-\hat u\|<\e. 
\end{equation}
\end{definition}

The following theorem shows that the approximate controllability is true for any positive time with a control function taking values in a two-dimensional space. 

\begin{theorem} \label{t2.1}
Let $h\in L_{\rm loc}^1(\R_+,L^2)$ and let~$E$ be the vector span of the functions~$\sin x$ and $\sin2x$. Then Eq.~\eqref{2.1}  is approximately controllable at any time~$T$ by an $E$-valued control. 
\end{theorem}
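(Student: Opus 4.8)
The plan is to prove approximate controllability by the Agrachev--Sarychev scheme, whose heart is a convexification/saturation argument. The key structural fact is that the quadratic nonlinearity $u\,\p_x u$ can, through a suitable change of unknown, generate new effective forcing directions outside the two-dimensional control space $E=\linspan\{\sin x,\sin 2x\}$. I would organise the proof around an \emph{extended control system} and an induction on a family of subspaces obtained by iterated ``brackets'' of the available directions.

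Let me sketch the three main steps. First, I would reformulate the problem using the substitution that lets the control act as a low-regularity perturbation: writing the solution as $u=\zeta+v$ where $\zeta$ absorbs the control, Proposition~\ref{p1.5} guarantees that the resolving operator depends H\"older-continuously on the control measured only in the weak relaxation norm $\nnorm{\cdot}{0}$. This is precisely why a control that is wildly oscillating in time but small in $\nnorm{\cdot}{0}$ steers the solution close to a \emph{prescribed} trajectory of a modified equation. Second, I would introduce the saturation construction: starting from $E_0=E$, define $E_{k+1}$ by adjoining to $E_k$ all products-type directions arising from the nonlinear term (concretely, functions of the form $\p_x(\xi\,\p_x\xi')$ projected appropriately, for $\xi,\xi'\in E_k$). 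The central algebraic claim is that the resulting saturating space is dense in $L^2$; for $E$ spanned by $\sin x$ and $\sin 2x$ one checks that Fourier modes propagate, since products of $\sin mx$ and $\sin nx$ produce $\sin(m\pm n)x$, so iterating generates every $\sin kx$. Third, I would run the induction: approximate controllability by an $E_{k+1}$-valued control is reduced to approximate controllability by an $E_k$-valued control, by realising the extra directions in $E_{k+1}$ as effective forces produced by fast oscillations of the $E_k$-control, with the H\"older estimate controlling the error. The base case is the easy observation that the linear parabolic equation is approximately controllable by a control valued in the full (dense saturating) space, and the induction collapses this back down to $E$.

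The main obstacle, and the step deserving the most care, is the \textbf{inductive reduction} $E_{k+1}\rightsquigarrow E_k$: one must show that for any target reachable with an $E_{k+1}$-control one can design an $E_k$-control whose rapid oscillation in time simulates the missing quadratic directions up to arbitrarily small error in $\XX_T$. Concretely, given a direction $\chi\in E_{k+1}\setminus E_k$ of the form generated by $\xi,\xi'\in E_k$, I would force the system with $\eta=\lambda\,\phi(\lambda t)\,\xi+\psi(\lambda t)\,\xi'$ for large $\lambda$ and suitably chosen mean-zero profiles $\phi,\psi$, so that the \emph{time-average} of the nonlinear interaction reproduces $\chi$ while the relaxation norm of the oscillatory part stays $O(\lambda^{-1})$. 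Making this averaging rigorous—identifying exactly which quadratic combinations survive and bounding the remainder via Proposition~\ref{p1.5}—is the technical crux. The remaining ingredients (the density of the saturating space, and the elementary controllability of the auxiliary linear heat equation) are comparatively routine.
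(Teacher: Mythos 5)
Your proposal follows essentially the same route as the paper: the extension of the system by a control $\zeta$ absorbed into the unknown, the convexification/relaxation step realised by fast time-oscillation between finitely many values of $\zeta$ with the error controlled through the H\"older estimate of Proposition~\ref{p1.5} in the norm $\nnorm{\cdot}{0}$, the saturation $\FF(E_1,E_k)=E_{k+1}$ giving density of $\bigcup_k E_k$, and the final descent from a large space $E_N$ (where a control is obtained by prescribing a trajectory joining $u_0$ to a mollification of $\hat u$ and reading off the residual force). You also correctly single out the averaging argument and its quantitative control in the relaxation norm as the technical crux, so the plan is sound and matches Steps~1--4 of the paper's proof.
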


This result is proved in Section~\ref{s2.2}--\ref{s2.5}. Here we present the scheme of the proof. 

\begin{proof}[Outline of the proof of Theorem~\ref{t2.1}]
Let us fix positive numbers~$T$ and~$\e$, arbitrary functions $u_0,\hat u\in L^2$, and a finite-dimensional space $G\subset H_0^1\cap H^2$. We shall say that Eq.~\eqref{2.1} is {\it $\e$-controllable by a $G$-valued control\/} (for given data $u_0,\hat u$, and~$T$) if there exists $\eta\in L^2(J_T,G)$ such that~\eqref{2.2} holds. Theorem~\ref{t2.1} will be established if we show that, for any $u_0,\hat u\in L^2$, Eq.~\eqref{2.1} is $\e$-controllable by an $E$-valued control. The proof of this fact is divided into four steps. 

\medskip
\underline{\sl Step~1: Extension principle}. 
Along with~\eqref{2.1}, consider the equation
\begin{equation} \label{2.3}
\p_t u-\nu \p_x^2(u+\zeta(t,x))+(u+\zeta(t,x))\p_x(u+\zeta(t,x))=h(t,x)+\eta(t,x), 
\end{equation}
where $\eta$ and $\zeta$ are $G$-valued controls. We say that Eq.~\eqref{2.3} is {\it $\e$-controllable by $G$-valued controls\/} if there are functions $\eta,\zeta\in L^2(J_T,G)$ such that the solution $u\in\XX_T$ of~\eqref{2.3}, \eqref{1.2}, \eqref{1.3} satisfies the inequality
\begin{equation} \label{2.4}
\|u(T)-\hat u\|<\e.
\end{equation}
Even though Eq.~\eqref{2.3} is ``more controlled'' than Eq.~\eqref{2.1},
it turns out that the property of $\e$-controllability
is equivalent for them. Namely, we have the following result.

\begin{proposition} \label{p2.3}
For any finite-dimensional subspace $G\subset H_0^1\cap H^2$ and any functions $u_0, \hat u\in L^2$, Eq.~\eqref{2.1} is $\e$-controllable by a $G$-valued control if and only if so is Eq.~\eqref{2.3}. 
\end{proposition}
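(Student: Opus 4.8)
The plan is to prove the nontrivial implication by a change of unknown that trades the auxiliary control~$\zeta$ for an additional piece of the main control~$\eta$, after a preliminary smoothing of~$\zeta$ in time. The implication ``\eqref{2.1} $\e$-controllable $\Rightarrow$ \eqref{2.3} $\e$-controllable'' is immediate: given a control~$\eta$ realising~\eqref{2.2} for~\eqref{2.1}, the pair $(\eta,\zeta)$ with $\zeta\equiv0$ realises~\eqref{2.4} for~\eqref{2.3}, since setting $\zeta=0$ turns~\eqref{2.3} into~\eqref{2.1}. So the whole content lies in the converse.

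Assume \eqref{2.3} is $\e$-controllable, and fix $\eta,\zeta\in L^2(J_T,G)$ for which the solution $u\in\XX_T$ of~\eqref{2.3}, \eqref{1.2}, \eqref{1.3} satisfies $\|u(T)-\hat u\|<\e$. The central observation is the following substitution identity: setting $v:=u+\zeta$ and using that $G\subset H_0^1\cap H^2$, so that~$\zeta$ respects the Dirichlet condition, one computes that~$v$ formally solves the genuine Burgers equation
$$
\p_t v-\nu\p_x^2 v+v\,\p_x v=h+\eta+\p_t\zeta,
$$
which is~\eqref{2.1} with the modified control $\eta+\p_t\zeta$. Since~$\zeta$ is $G$-valued and~$G$ is a fixed finite-dimensional space, $\p_t\zeta$ is again $G$-valued, so $\eta+\p_t\zeta\in L^2(J_T,G)$ is admissible, provided $\p_t\zeta$ makes sense, which is the one point requiring care.

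To give the substitution a rigorous meaning I would first regularise~$\zeta$. Smooth $G$-valued functions vanishing at $t=0$ and $t=T$ are dense in $L^2(J_T,G)$, so choose $\zeta_n\in C^\infty(J_T,G)$ with $\zeta_n(0)=\zeta_n(T)=0$ and $\zeta_n\to\zeta$ in $L^2(J_T,G)$. Equation~\eqref{2.3} is the special case of~\eqref{1.0} with $v=w=\zeta$ and $f=h+\eta$, and the convergence $\zeta_n\to\zeta$ holds in both $L^2(J_T,H^2)$ and $L^1(J_T,H^2)$ (all Sobolev norms on the finite-dimensional space~$G$ being equivalent); hence, by the Lipschitz continuity of the resolving operator stated in Remark~\ref{r1.3}, the solution~$u_n$ of~\eqref{2.3} driven by $(\eta,\zeta_n)$ converges to~$u$ in~$\XX_T$, so $\|u_n(T)-\hat u\|<\e$ for all large~$n$. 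Fixing such an~$n$ and setting $v_n:=u_n+\zeta_n$, the quantity $\p_t\zeta_n\in C^\infty(J_T,G)$ is now a bona fide element of $L^2(J_T,G)$, the computation above is legitimate, and $v_n\in\XX_T$ solves~\eqref{2.1} with $G$-valued control $\tilde\eta_n:=\eta+\p_t\zeta_n$. The endpoint conditions pay off exactly here: $\zeta_n(0)=0$ gives $v_n(0)=u_0$, so $v_n=\RR(u_0,h+\tilde\eta_n)$ is the genuine trajectory, while $\zeta_n(T)=0$ gives $v_n(T)=u_n(T)$, whence $\|\RR_T(u_0,h+\tilde\eta_n)-\hat u\|<\e$. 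Thus~\eqref{2.1} is $\e$-controllable by a $G$-valued control.

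The only delicate step is the smoothing. The clean identity $v=u+\zeta$ costs a factor $\p_t\zeta$ that is undefined for a merely $L^2$-in-time control, so everything hinges on approximating~$\zeta$ by time-smooth controls vanishing at both endpoints while keeping the terminal state within~$\e$ of~$\hat u$; it is precisely the bounded-set Lipschitz continuity of Remark~\ref{r1.3} (rather than the weaker relaxation-norm estimate of Proposition~\ref{p1.5}, which is not needed for this proposition) that makes this step go through.
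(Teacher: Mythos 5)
Your proof is correct and follows essentially the same route as the paper: the trivial direction via $\zeta\equiv0$, and for the converse a preliminary approximation of $\zeta$ by a smooth $G$-valued function vanishing at $t=0$ and $t=T$ (justified by the Lipschitz continuity of the resolving operator from Remark~\ref{r1.3}), followed by the substitution $v=u+\zeta$, which solves~\eqref{2.1} with the $G$-valued control $\eta+\p_t\zeta$. The paper's argument is word-for-word this one, so there is nothing to add.
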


\medskip
\underline{\sl Step~2: Convexification principle}. 
Now let $N\subset H^2\cap H_0^1$ be another finite-dimensional subspace such that
\begin{equation} \label{2.5}
 N\subset G, \quad B(N)\subset G,
\end{equation}
where $B(u)=u\p_x u$. Denote by $\FF(N,G)$ the intersection of $H^2\cap H_0^1$ with the vector space spanned by the functions of the form\footnote{Note that a function of the form~\eqref{2.6} does not necessarily belong to~$H^2\cap H_0^1$, and therefore the space $\FF(N,G)$ may coincide with~$G$.} 
\begin{equation} \label{2.6}
\eta+\xi\p_x\xi'+\xi'\p_x\xi,
\end{equation}
where $\eta,\xi\in G$ and $\xi'\in N$. It is easy to see that $\FF(N,G)\subset H^2\cap H_0^1$ is a well-defined finite-dimensional space containing~$G$.
The following proposition, which is an infinite-dimensional analogue of
the well-known convexification principle for controlled ODE's (e.g.,
see~\cite[Theorem~8.7]{AS2004}), is a key point of the proof of
Theorem~\ref{t2.1}.

\begin{proposition} \label{p2.4}
Let $N,G\subset H^2\cap H_0^1$ be finite-dimensional subspaces satisfying inclusions~\eqref{2.5}. Then~\eqref{2.3} is $\e$-controllable by $G$-valued controls if and
only if~\eqref{2.1}  is $\e$-controllable by an $\FF(N,G)$-valued control.
\end{proposition}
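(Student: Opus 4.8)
The statement is best read together with the extension principle (Proposition~\ref{p2.3}): its real content is that enlarging the control space from $G$ to the bigger space $\FF(N,G)$ does not change $\e$-controllability, the extra directions being generated by the nonlinearity. I would prove the two implications separately, the converse being essentially a corollary of Proposition~\ref{p2.3}.

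\emph{Easy implication.} Suppose \eqref{2.3} is $\e$-controllable by $G$-valued controls. By Proposition~\ref{p2.3}, Eq.~\eqref{2.1} is then $\e$-controllable by a $G$-valued control; since $G\subset\FF(N,G)$, that control is a fortiori $\FF(N,G)$-valued, so \eqref{2.1} is $\e$-controllable by an $\FF(N,G)$-valued control. Enlarging the admissible control space can only help, so nothing more is needed here.

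\emph{Hard implication.} Assume \eqref{2.1} is $\e$-controllable by some $\eta^\ast\in L^2(J_T,\FF(N,G))$, and let $w$ be the corresponding solution, $\|w(T)-\hat u\|<\e$. Since a generator of $\FF(N,G)$ has the form $\eta+\p_x(\xi\xi')$ with $\eta,\xi\in G$ and $\xi'\in N$, I would decompose (pointwise in $t$, via a fixed linear projection) $\eta^\ast=\eta_0+\rho$, where $\eta_0(t)\in G$ and $\rho(t)$ is a combination of cross-directions $\p_x(\xi\xi')$. Rewriting \eqref{2.3} in the form \eqref{2.1} with a perturbed right-hand side, its solution $u$ satisfies $\p_tu-\nu\p_x^2u+u\p_xu=h+\eta+\nu\p_x^2\zeta-\p_x(u\zeta)-\zeta\p_x\zeta$. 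The plan is to reproduce $\rho$ through a high-frequency control $\zeta$ and to dump every genuinely $G$-valued by-product into $\eta$. I would take $\zeta=\zeta^\delta$ rapidly oscillating in time with vanishing running mean, so that $\nnorm{\zeta^\delta}{0}\to0$: the two terms linear in $\zeta$ then vanish in the relaxation norm $\nnorm{\cdot}{0}$, while $\zeta\p_x\zeta$ survives by averaging. For a single cross-direction $\p_x(\xi\xi')$, put $\zeta^\delta(t,x)=a^\delta(t)\chi(x)$ with $\chi=\kappa\xi\pm\kappa^{-1}\xi'\in G$ and $a^\delta$ of prescribed mean square; the identity $B(\chi)=\kappa^2B(\xi)\pm\p_x(\xi\xi')+\kappa^{-2}B(\xi')$ shows that the time-average of $\zeta^\delta\p_x\zeta^\delta$ equals $\pm\p_x(\xi\xi')$, up to the $O(\kappa^2)$ term $\kappa^2B(\xi)$ and the term $\kappa^{-2}B(\xi')$, which lies in $G$ because $B(N)\subset G$. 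Choosing $\eta\in G$ to be $\eta_0$ corrected by these $G$-valued by-products, and letting first $\delta\to0$ and then $\kappa\to0$, the effective right-hand side of the $u$-equation converges to $h+\eta^\ast$ in the norm $\nnorm{\cdot}{0}$. Finitely many cross-directions and the $t$-dependence of $\rho$ are handled by superposing mutually orthogonal oscillations with slowly modulated amplitudes. Proposition~\ref{p1.5} then gives $\|u-w\|_{\XX_T}\to0$, hence $\|u(T)-\hat u\|<\e$ for $\delta,\kappa$ small, so \eqref{2.3} is $\e$-controllable by $G$-valued controls.

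\emph{Main obstacle.} The delicate point is the solution-dependent term $\p_x(u\zeta^\delta)$: one must show that $\nnorm{\p_x(u\zeta^\delta)}{0}\to0$ uniformly over the solutions $u$ that arise as $\delta\to0$. This calls for an a priori bound on $\|u\|_{\XX_T}$ uniform in $\delta$ (available from the energy estimate~\eqref{1.4} once $\kappa$ is fixed), combined with an oscillatory-integral estimate guaranteeing that $\int_0^t u\,\zeta^\delta\,dr\to0$ uniformly on bounded subsets of $\XX_T$. Closing this self-consistent estimate and ordering the two-parameter limit so that the $O(\kappa^2)$ error and the $\delta\to0$ errors are controlled together is the technical heart of the argument; once it is in place, everything else is a direct appeal to the relaxation-norm continuity of Proposition~\ref{p1.5}.
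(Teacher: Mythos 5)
Your strategy is the paper's: the forward direction via Proposition~\ref{p2.3} together with the inclusion $G\subset\FF(N,G)$, and the converse via the polarization identity $\xi\p_x\xi'+\xi'\p_x\xi=B(\kappa\xi+\kappa^{-1}\xi')-\kappa^2 B(\xi)-\kappa^{-2}B(\xi')$ (this is exactly the paper's Lemma~\ref{l5.1}, with the $\kappa^{-2}B(\xi')$ by-product absorbed into $G$ thanks to $B(N)\subset G$), a zero-mean fast oscillation of $\zeta$ that kills the terms linear in $\zeta$ in the relaxation norm while the quadratic term survives by averaging, and Proposition~\ref{p1.5} to pass to the limit. So the ideas are the right ones. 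However, the step you single out as the ``main obstacle'' is a genuine gap, and the route you sketch for closing it would not work as stated: an estimate of the form $\nnorm{\p_x(u\,\zeta^\delta)}{0}\to0$ \emph{uniformly over bounded subsets of\/} $\XX_T$ is false (take $u$ oscillating in phase with $\zeta^\delta$), and proving it ``uniformly over the solutions $u$ that arise as $\delta\to0$'' is circular, since those solutions are precisely what the estimate is supposed to control.

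The paper sidesteps this by reversing the roles of the two equations. One first solves the \emph{averaged} equation~\eqref{49}; its solution $\tilde u$ is a single fixed element of $\XX_T$, independent of the oscillation parameter $m$. One then observes that $\tilde u$ is an \emph{exact} solution of~\eqref{2.3} with $\zeta=\zeta_m$ and an additional forcing $f_m=f_{m1}+f_{m2}$ built from $\tilde u$ itself (formulas~\eqref{01}, \eqref{02}), so that Proposition~\ref{p1.5} reduces the whole matter to showing $\nnorm{f_m}{0}\to0$ for this one fixed $\tilde u$. That in turn is proved by approximating $\tilde u$ in $L^2(J_T,H^1)$ by a piecewise-constant-in-time function (legitimate because $B$ is Lipschitz from $L^2(J_T,H^1)$ to $L^1(J_T,L^2)$ and the relaxation norm is dominated by the $L^1(J_T,L^2)$ norm), after which the integral of $f_{m2}$ over each full oscillation period contained in an interval of constancy vanishes, and the running primitive is $O(1/m)$. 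You should also replace the vague ``superposing mutually orthogonal oscillations with slowly modulated amplitudes'' by the paper's preliminary reduction to piecewise-constant $\FF(N,G)$-valued controls (density in $L^2(J_T,\FF(N,G))$ plus continuity of $\RR_T$), which lets you treat one interval of constancy at a time and makes the time-dependence issue disappear.
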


\medskip
\underline{\sl Step~3: Saturating property}. 
Propositions~\ref{p2.3} and~\ref{p2.4} imply the following result, which is a kind of ``relaxation property'' for  the controlled Navier--Stokes system. 

\begin{proposition} \label{p2.5}
Let $N,G\subset H^2\cap H_0^1$ be finite-dimensional subspaces satisfying inclusions~\eqref{2.5}. Then~\eqref{2.1} is $\e$-controllable by a $G$-valued control if and only if it is $\e$-controllable by an $\FF(N,G)$-valued control.
\end{proposition}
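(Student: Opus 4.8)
The plan is to read off Proposition~\ref{p2.5} directly as the composition of the two equivalences established in Steps~1 and~2. Indeed, the only genuine analytic work---the extension argument behind Proposition~\ref{p2.3} and the convexification argument behind Proposition~\ref{p2.4}---has already been carried out, and what remains is merely to chain these two results together in the correct order.

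First I would check that the hypotheses of both propositions are met for the given data. Since $N,G$ satisfy the inclusions~\eqref{2.5}, and since $\FF(N,G)$ is a well-defined finite-dimensional subspace of $H^2\cap H_0^1$ containing~$G$ (as recorded in the paragraph following the definition~\eqref{2.6}), both Proposition~\ref{p2.3} and Proposition~\ref{p2.4} are applicable. I would then invoke Proposition~\ref{p2.3} to conclude that Eq.~\eqref{2.1} is $\e$-controllable by a $G$-valued control if and only if Eq.~\eqref{2.3} is $\e$-controllable by $G$-valued controls. Next, Proposition~\ref{p2.4} states that this latter property holds if and only if Eq.~\eqref{2.1} is $\e$-controllable by an $\FF(N,G)$-valued control. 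Composing these two ``if and only if'' statements yields precisely the asserted equivalence between $\e$-controllability of~\eqref{2.1} by a $G$-valued control and by an $\FF(N,G)$-valued control.

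Because the substantive content is entirely contained in the preceding two propositions, I do not expect any real obstacle in this step; the proof is a formal transitivity argument. The single point that deserves a moment's attention is bookkeeping: one must verify that the intermediate object appearing in both equivalences is literally the same notion---namely the $\e$-controllability of the auxiliary equation~\eqref{2.3} by $G$-valued controls---so that the two chains of implication can be glued without any mismatch of control spaces. Once this consistency is observed, the conclusion follows immediately.
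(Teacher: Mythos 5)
Your proposal is correct and matches the paper exactly: the paper presents Proposition~\ref{p2.5} as an immediate consequence of chaining Proposition~\ref{p2.3} (equivalence of $\e$-controllability of~\eqref{2.1} and~\eqref{2.3} by $G$-valued controls) with Proposition~\ref{p2.4} (equivalence of $\e$-controllability of~\eqref{2.3} by $G$-valued controls and of~\eqref{2.1} by an $\FF(N,G)$-valued control), with no additional argument. Your bookkeeping check that the intermediate notion is literally the same in both equivalences is the only point that needs verifying, and it holds.
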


We now introduce the subspaces $E_k=\{\sin(jx), 1\le j\le k\}$, so
that the space~$E$ defined in Theorem~\ref{t2.1} coincides
with~$E_2$. We wish to apply Proposition~\ref{p2.5} to the subspaces
$N=E_1$ and $G=E_k$.

\begin{lemma} \label{l2.7}
For any integer $k\ge2$, we have $\FF(E_1,E_k)=E_{k+1}$. 
\end{lemma}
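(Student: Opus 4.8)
The plan is to reduce the claim to an elementary trigonometric computation, so there is essentially no analysis involved once the algebraic structure is unwound. First I would simplify the generating expression in~\eqref{2.6}. Since $\xi\p_x\xi'+\xi'\p_x\xi=\p_x(\xi\xi')$ and $N=E_1=\linspan\{\sin x\}$, while $\eta$ ranges freely over $G=E_k$, the space spanned by the functions~\eqref{2.6} is
$$
E_k+\linspan\bigl\{\p_x(\xi\,\sin x):\xi\in E_k\bigr\},
$$
intersected with $H^2\cap H_0^1$ (one recovers $E_k$ by taking $\xi=0$, and each $\p_x(\xi\sin x)$ by taking $\eta=0$). The whole question then becomes: which sine frequencies are produced by $\p_x\bigl(\sin(jx)\sin x\bigr)$ for $1\le j\le k$?

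Second, I would carry out the product-to-sum step. From $\sin(jx)\sin x=\tfrac12\bigl[\cos((j-1)x)-\cos((j+1)x)\bigr]$ one obtains
$$
\p_x\bigl(\sin(jx)\sin x\bigr)=\tfrac12\bigl[(j+1)\sin((j+1)x)-(j-1)\sin((j-1)x)\bigr].
$$
This gives the inclusion $\FF(E_1,E_k)\subseteq E_{k+1}$ at once: every generator is a linear combination of functions $\sin(mx)$ with $1\le m\le k+1$, and each such function is smooth and vanishes at $x=0,\pi$, hence lies in $H^2\cap H_0^1$.

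Third, for the reverse inclusion I note that $E_k$ is already contained in the span, so it remains only to produce $\sin((k+1)x)$. Specialising the identity above to $j=k$ yields $\p_x(\sin(kx)\sin x)=\tfrac12\bigl[(k+1)\sin((k+1)x)-(k-1)\sin((k-1)x)\bigr]$; since $\sin((k-1)x)\in E_k$ — and here the hypothesis $k\ge2$ is exactly what guarantees $1\le k-1\le k$, so this term is a genuine element of $E_k$ — subtracting the appropriate multiple of $\sin((k-1)x)$ isolates $\sin((k+1)x)$. Therefore $E_{k+1}\subseteq\FF(E_1,E_k)$, and equality follows.

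The only genuine subtlety, which I would flag explicitly, is the concern raised in the footnote to~\eqref{2.6}: a function of that form need not belong to $H^2\cap H_0^1$, so a priori the intersection defining $\FF(N,G)$ could strictly shrink the span. The point I expect to be the main thing to verify — though it is not hard — is that for the sine basis this obstruction never arises: products of the $\sin(mx)$ and their derivatives stay inside $\linspan\{\sin(mx)\}$, and every such function already satisfies the boundary and regularity conditions. Thus the intersection with $H^2\cap H_0^1$ is vacuous here, and the computation of the two inclusions above is all that is needed.
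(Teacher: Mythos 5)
Your proof is correct and follows essentially the same route as the paper's: the identical product-to-sum computation for $\xi\p_x\xi'+\xi'\p_x\xi=\p_x(\sin(jx)\sin x)$ yields $\FF(E_1,E_k)\subseteq E_{k+1}$, and specialising to $j=k$ and solving for $\sin((k+1)x)$ (using $k\ge2$ so that $\sin((k-1)x)\in E_k$) yields the reverse inclusion. Your explicit remark that the intersection with $H^2\cap H_0^1$ removes nothing here — because the span already consists of finite sine sums, which lie in $H^2\cap H_0^1$ — is left implicit in the paper but is a correct and harmless addition rather than a divergence.
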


Proposition~\ref{p2.5} and Lemma~\ref{l2.7} imply that Eq.~\eqref{2.1} is $\e$-controllable by an $E_k$-valued control if and only if it is $\e$-controllable by an $E_{k+1}$-valued control. Thus, Theorem~\ref{t2.1} will be established if we find an integer $N\ge2$ such that~\eqref{2.1}  is $\e$-controllable by an $E_N$-valued control. We shall be able to do that due to the {\it saturating property\/}
\begin{equation} \label{2.7}
\bigcup_{k=2}^\infty E_k\mbox{ is dense in $L^2$},
\end{equation}
which is a straightforward consequence of the definition of~$E_k$. 

Let us mention that, in general, explicit description of the subspace~$\FF(N,G)$ and the proof of~\eqref{2.7} are difficult tasks. In our situation, it is possible to do due to the simple structures of trigonometric polynomials and of the domain on which they are studied.

\medskip
\underline{\sl Step~4: Case of a large control space}. 
It is easy to construct $\eta\in C(J_T,L^2)$ for which~\eqref{2.2} holds. Using~\eqref{2.7}, it is not difficult to
approximate~$\eta$, within any accuracy~$\delta>0$, by a function
belonging to $C(J_T, E_N)$. Since $\RR_t(u_0,\cdot)$ is continuous, what has been said implies that~\eqref{2.2} holds for an $E_N$-valued control~$\eta$. This completes the proof of Theorem~\ref{t2.1}.
\end{proof}

\subsection{Extension}
\label{s2.2}
Let us prove Proposition~\ref{p2.3}. If Eq.~\eqref{2.1}  is $\e$-controllable by a $G$-valued control, then so is~\eqref{2.3}, because one can take~$\zeta\equiv0$. Let us establish the converse assertion.

\smallskip
Let us denote by $\widehat\RR$ the resolving operator for problem~\eqref{2.3}, \eqref{1.2}, \eqref{1.3}, that is, a mapping that takes a triple $(u_0, \eta,\zeta)$ to the solution $u\in \XX_T$ of the problem in question with $h\equiv0$. By Remark~\ref{r1.3}, the operator~$\widehat\RR$ is Lipschitz continuous on bounded subsets of some appropriate functional spaces. 
Let $\hat\eta,\hat \zeta\in L^2(J_T,G)$ be arbitrary controls such that
\begin{equation} \label{3.1}
\|\widehat\RR_T(u_0,h+\hat\eta,\hat\zeta)-\hat u\|<\e,
\end{equation}
where $\widehat\RR_t$ stands for the restriction of~$\widehat\RR$ at time~$t$. 
In view of continuity of $\widehat\RR_T(u_0,h+\eta,\zeta)$ with respect to~$\zeta\in L^2(J_T,G)$, there is no loss of generality in assuming that 
\begin{equation} \label{3.2}
\hat\zeta\in C^\infty(J_T,G), \quad \hat\zeta(0)=\hat\zeta(T)=0.
\end{equation}
Consider the function $u(t)=\widehat\RR_t(u_0,h+\hat\eta,
\hat\zeta)+\hat\zeta(t)$. It is straightforward to see that it
belongs to the space~$\XX_T$ and satisfies Eqs.~\eqref{2.1}, \eqref{1.2}, \eqref{1.3} with $\eta=\hat\eta+\p_t\hat\zeta\in L^2(J_T,G)$. Moreover, it follows
from~\eqref{3.1} and~\eqref{3.2} that 
$$ 
u(0)=u_0, \quad \|u(T)-\hat u\|
=\|\widehat\RR_T(u_0,h+\hat\eta, \hat\zeta)-\hat u\|<\e.  
$$ 
Thus, Eq.~\eqref{2.1} is $\e$-controllable by a $G$-valued control. 
 
\subsection{Convexification}
\label{s2.3}
Let us prove Proposition~\ref{p2.4}. It follows from the extension
principle that if Eq.~\eqref{2.3} is $\e$-controllable
by $G$-valued controls, then~\eqref{2.1} is
$\e$-controllable by a $G$-valued control and all the more by an
$\FF(N,G)$-valued control. The proof of the converse assertion is
divided into several steps. We need to show that if $\eta_1:J_T\to\FF(N,G)$ is a square-integrable function such that
\begin{equation} \label{2.10}
\|\RR_T(u_0,h+\eta_1)-\hat u\|<\e,
\end{equation}
then there are $\eta,\zeta\in L^2(J_T,G)$ such that
\begin{equation} \label{2.11}
\|\widehat\RR_T(u_0,h+\eta,\zeta)-\hat u\|<\e.
\end{equation}

\medskip
{\sl Step~1}. 
We first show that it suffices to consider the case in which~$\eta_1$
is a piecewise constant function. Indeed, suppose
Proposition~\ref{p2.4} is proved in that case and denote
$G_1=\FF(N,G)$. For a given $\eta_1\in L^2(J_T,G_1)$, we can find a
sequence~$\{\eta^m\}$ of piecewise constant $G_1$-valued functions
such that
$$
\|\eta_1-\eta^m\|_{L^2(J_T,G_1)}\to0\quad\mbox{as $m\to\infty$}.
$$
By continuity of~$\RR_t$, there is an integer $n\ge1$ such that
\begin{equation} \label{43}
\|\RR_T(u_0,h+\eta^n)-\hat u\|<\e.
\end{equation}
Since the result is true in the case of piecewise constant controls, we can find $\eta,\zeta\in L^2(J_T,G)$ such that~\eqref{2.11} holds.

\medskip
{\sl Step~2}. We now consider the case of piecewise constant
$G_1$-valued controls. A simple iteration argument combined with the
continuity of~$\RR_t$ and~$\widehat \RR_t$ shows that it suffices to
consider the case of one interval of constancy. Thus, we shall assume
that $\eta_1(t)\equiv\eta_1\in G_1$.

We shall need the lemma below, whose proof is given at the end of this subsection. Recall that $B(u)=u\p_x u$. 

\begin{lemma} \label{l5.1}
For any $\eta_1\in\FF(N,G)$ and any $\delta>0$ there is an integer
$k\ge1$, numbers $\alpha_j>0$, and vectors $\eta,\zeta^j\in G$,
$j=1,\dots,k$, such that
\begin{align}
\sum_{j=1}^k\alpha_j&=1,\label{45}\\
\Bigl\|\eta_1-B(u)-\Bigl(\eta-\sum_{j=1}^k\alpha_j\bigl(B(u+\zeta^j)-\nu\p_x^2\zeta^j\bigr)\Bigr)\Bigr\|
&\le\delta\quad\mbox{for any $u\in H^1$}.\label{46}
\end{align}
\end{lemma}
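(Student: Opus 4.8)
The goal is to approximate the target $\eta_1 \in \FF(N,G)$, which by definition is a linear combination of terms of the form~\eqref{2.6}, namely $\eta + \xi\p_x\xi' + \xi'\p_x\xi$ with $\eta,\xi\in G$ and $\xi'\in N$. I would begin by reducing to a single such generator: since the desired conclusion~\eqref{46} is stated uniformly in $u\in H^1$ and the collection of achievable right-hand sides (over choices of $k$, $\alpha_j$, and $\zeta^j$) is closed under addition in a suitable sense, it is enough to produce the required representation for one summand and then combine finitely many such representations. So I would assume $\eta_1 = \eta_0 + \xi\p_x\xi' + \xi'\p_x\xi$ for fixed $\eta_0,\xi\in G$ and $\xi'\in N$.

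The key algebraic identity to exploit is the polarisation of the quadratic map $B$. Writing $B(u+\zeta) = (u+\zeta)\p_x(u+\zeta)$ and expanding, one finds that for a vector $\zeta\in G$,
\begin{equation*}
B(u+\zeta) - B(u) = \zeta\p_x\zeta + \bigl(u\p_x\zeta + \zeta\p_x u\bigr),
\end{equation*}
so that the cross terms $u\p_x\zeta + \zeta\p_x u$ are exactly what must be cancelled, since the target~\eqref{46} should hold for \emph{all} $u$. The standard Agrachev--Sarychev device is to use $\zeta^j = t_j\,\xi'$ for suitable scalars $t_j$ together with a sign-balancing choice of convex weights: because $B$ is quadratic and homogeneous of degree two, taking $\zeta$ proportional to $\xi'\in N$ and averaging over $\pm$ directions kills the linear-in-$u$ cross terms while leaving a controllable quadratic remainder. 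Concretely, I would look for weights $\alpha_j$ summing to one and vectors $\zeta^j$ chosen among $\{\pm s\,\xi' : s>0\}$ so that
\begin{equation*}
\sum_{j=1}^k \alpha_j\bigl(B(u+\zeta^j) - B(u)\bigr) = \sum_{j=1}^k \alpha_j\,\zeta^j\p_x\zeta^j,
\end{equation*}
with the $u$-dependent part telescoping to zero by the symmetry $\sum_j\alpha_j\zeta^j = 0$ enforced on the first-order terms; the surviving term is a pure quadratic in $\xi'$ that I can match to the target by scaling.

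The remaining task is then to realise the specific combination $\xi\p_x\xi' + \xi'\p_x\xi$ appearing in $\eta_1$. Noting that $\xi\p_x\xi' + \xi'\p_x\xi = \p_x(\xi\xi') = B(\xi+\xi') - B(\xi) - B(\xi')$ up to the quadratic self-terms, I would absorb the unwanted $B(\xi)$-type and $\nu\p_x^2\zeta^j$ contributions into the free vector $\eta\in G$ (this is why $\eta$ is allowed to be chosen and why $N\subset G$ and $B(N)\subset G$ are imposed in~\eqref{2.5}: they guarantee $\p_x^2\xi'$ and $B(\xi')$ stay inside $G$, so the bookkeeping terms are legitimately $G$-valued). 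A slight mismatch that cannot be absorbed algebraically is handled by the $\delta$ in~\eqref{46}: since only finitely many $\zeta^j$ are used and each term is a fixed smooth function, any discretisation or higher-order error can be made smaller than $\delta$ by taking the scaling parameter $s$ large and $k$ large enough to approximate the ideal convex average.

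The main obstacle I anticipate is the uniformity in $u$: the cancellation of the cross terms $u\p_x\zeta^j + \zeta^j\p_x u$ must be \emph{exact} in $u$, not merely approximate, because~\eqref{46} quantifies over all $u\in H^1$ with no smallness. This forces the symmetry constraint $\sum_j\alpha_j\zeta^j=0$ to hold identically, which in turn rigidly couples the choice of weights and directions; reconciling this linear constraint with the requirement~\eqref{45} that $\alpha_j>0$ sum to one, while still hitting the prescribed quadratic target, is the delicate combinatorial heart of the argument, and I expect it to be where the precise counting of $k$ and the explicit construction of the $\zeta^j$ must be carried out carefully.
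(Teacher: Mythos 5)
Your overall strategy coincides with the paper's: reduce to generators of $\FF(N,G)$, take a convex combination of directions $\zeta^j$ paired as $\pm$ with equal weights so that $\sum_j\alpha_j\zeta^j=0$ kills the $u$-dependent cross terms $u\p_x\zeta^j+\zeta^j\p_xu$ (and, incidentally, the terms $\nu\p_x^2\zeta^j$) exactly and uniformly in $u$, and absorb what remains into the free vector $\eta$. But there is a concrete gap in your choice of directions. You propose $\zeta^j\in\{\pm s\,\xi'\}$, proportional to $\xi'\in N$ alone. Since $B$ is quadratic, the surviving term of such a combination is a positive multiple of $B(\xi')=\xi'\p_x\xi'$, which lies on a single ray and can never produce the cross term $\xi\p_x\xi'+\xi'\p_x\xi$ that the lemma must generate. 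The directions have to mix $\xi$ and $\xi'$ with reciprocal scalings: the identity actually used is
$$
\xi\p_x\xi'+\xi'\p_x\xi=B(\e\xi+\e^{-1}\xi')-\e^2B(\xi)-\e^{-2}B(\xi'),
$$
valid for every $\e>0$. The paper sets $\tilde\zeta^j=\e\xi_j+\e^{-1}\xi'_j$ and symmetrises with $k=2m$, $\alpha_j=\alpha_{j+m}=1/(2m)$, $\zeta^j=-\zeta^{j+m}=\sqrt m\,\tilde\zeta^j$, the factor $\sqrt m$ compensating the weights by homogeneity of $B$ (this also disposes of the "delicate combinatorial heart'' you anticipate, and of the issue of combining several generators: there is no further counting to do). The asymmetric scaling is what reconciles the three constraints you correctly identify: the cross term is independent of $\e$; the term $\e^{-2}B(\xi')$ blows up but is harmless because $B(N)\subset G$ lets it be absorbed into $\eta$; and the term $\e^2B(\xi)$, which cannot be absorbed --- nothing in~\eqref{2.5} places $B(G)$ inside $G$ --- is sent to zero by taking $\e$ small, and is the sole source of the error $\delta$ in~\eqref{46}. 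Your text gestures at $B(\xi+\xi')-B(\xi)-B(\xi')$ and at "taking the scaling parameter large'', but never pins down this reciprocal scaling, and without it the argument does not close.

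A secondary inaccuracy: your parenthetical claim that $\p_x^2\xi'$ lies in $G$ is not guaranteed by~\eqref{2.5} (only $N\subset G$ and $B(N)\subset G$ are assumed), so the terms $\nu\p_x^2\zeta^j$ cannot in general be absorbed into $\eta$. They do not need to be: the same $\pm$ antisymmetry that cancels the cross terms gives $\sum_j\alpha_j\p_x^2\zeta^j=0$, so these contributions vanish identically from the convex combination.
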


We fix a small $\delta>0$ and choose numbers~$\alpha_j>0$ and
vectors $\eta,\zeta^j\in G$ satisfying~\eqref{45}, \eqref{46}. Let us
consider the equation
\begin{equation} \label{49}
\p_t u-\nu \p_x^2u+\sum_{j=1}^k\alpha_j\bigl(B(u+\zeta^j(x))-\nu\p_x^2\zeta^j(x)\bigr)=h(t,x)+\eta(x).
\end{equation}
This is a Burgers-type equation, and using the same arguments as in
the case of the Burgers equation, it can be proved that
problem~\eqref{49}, \eqref{1.2}, \eqref{1.3} has a unique solution $\tilde
u\in\XX_T$. On the other hand, we can rewrite~\eqref{49} in the form
\begin{equation} \label{47}
\p_t u-\nu \p_x^2u+u\p_xu=h(t,x)+\eta_1(x)-r_\delta(t,x),
\end{equation}
where $r_\delta(t,x)$ stands for the function under sign of norm on the left-hand side of~\eqref{46} in which $u=\tilde u(t,x)$.
Since~$\RR_t$ is Lipschitz continuous on bounded subsets, there is $C>0$ depending only on the $L^2$~norm of~$\eta_1$ such that
\begin{align*}
\|\RR_T(u_0,h+\eta_1)-\tilde u(T)\|
&=\|\RR_T(u_0,h+\eta_1)-\RR_T(u_0,h+\eta_1-r_\delta)\|\\[2pt]
&\le C\|r_\delta\|_{L^1(J_T,L^2)}\le CT\delta,
\end{align*}
where we used inequality~\eqref{46}. Combining this with~\eqref{2.10}, we see that if $\delta>0$ is sufficiently small, then
\begin{equation} \label{48}
\|\tilde u(T)-\hat u\|<\e.
\end{equation}

We shall show that there is a sequence $\zeta_m\in L^2(J_T,G)$ such that
\begin{equation} \label{50}
\|\widehat\RR_T(u_0,h+\eta,\zeta_m)-\tilde u(T)\|\to0
\quad\mbox{as $m\to\infty$}. 
\end{equation}
In this case, inequalities~\eqref{48} and~\eqref{50} with  $m\gg1$ will imply the required estimate~\eqref{2.11} in which $\zeta=\zeta_m$.

\medskip
{\sl Step~3}.
Following a classical idea, we define a sequence
$\zeta_m\in L^2(J_T,G)$ by the relation $\zeta_m(t)=\zeta(mt/T)$,
where $\zeta:\R\to G$ is a $1$-periodic  function such that
$$
\zeta(t)=\zeta^j\quad\mbox{for $0\le t-(\alpha_1+\cdots+\alpha_{j-1})< \alpha_j$, $j=1,\dots,k$}. 
$$
Let us rewrite~\eqref{49} in the form
$$
\p_t \tilde u-\nu \p_x^2(\tilde u+\zeta_m(t,x))+B(\tilde u+\zeta_m(t,x))
=h(t,x)+\eta(x)+f_m(t,x),
$$
where we set $f_m=f_{m1}+f_{m2}$,
\begin{align}
f_{m1}(t,x)&=-\nu \p_x^2\zeta_m+\nu\sum_{j=1}^k\alpha_j\p_x^2\zeta^j,
\label{01}\\
f_{m2}(t,x)&=B(\tilde u+\zeta_m)-\sum_{j=1}^k\alpha_j B(\tilde u+\zeta^j).
\label{02}
\end{align}
Note that the sequence $\{f_m\}$ is bounded in $L^2(J_T,L^2)$. 
Therefore, by Proposition~\ref{p1.5}, we have 
$$
\|\widehat\RR_T(u_0,h+\eta,\zeta_m)
-\widehat\RR_T(u_0,h+\eta+f_m,\zeta_m)\|\le C\,\nnorm{f_m}{0}^{1/3}. 
$$
Since $\tilde u(T)=\widehat\RR_T(u_0,h+\eta+f_m,\zeta_m)$ and $f_m=f_{m1}+f_{m2}$, convergence~\eqref{50} will be established if we prove that
\begin{equation} \label{2.22}
\nnorm{f_{m1}}{0}+\nnorm{f_{m2}}{0}\to0
\quad\mbox{as $m\to\infty$}. 
\end{equation}

\medskip
{\sl Step~4}. 
We first estimate the norm of~$f_{m1}$. The definition of~$\zeta_m$ implies that 
$$
\int_{t_{k-1}}^{t_k}f_{m1}(s)\,ds=0\quad\mbox{for any integer $k\ge1$},
$$
where $t_k=kT/m$. It follows that 
$$
\int_0^tf_{m1}(s)\,ds=\int_{\hat t_m}^tf_{m1}(s)\,ds,
$$
where $\hat t_m$ is the largest number~$t_k$ that does not exceed~$t$. Since $f_{m1}(t)$ is bounded as a function with range in~$H^2$, we conclude that
\begin{equation} \label{2.23}
\nnorm{f_{m1}}{0}= \sup_{t\in J_T}\,\biggl\|\int_{\hat t_m}^tf_{m1}(s)\,ds\biggr\|
\le C_1\sup_{t\in J_T}|t-\hat t_m|\le C_2m^{-1}. 
\end{equation}

We now turn to the estimate for~$f_{m2}$. If the function~$\tilde u$ was independent of time, we could apply an argument similar to the one used above. However, this is not the case, and to prove the required estimate, we shall approximate~$\tilde u$ by piecewise constant functions. Namely, it is easy to see that the operator~$B$ is Lipschitz continuous from $L^2(J_T,H^1)$ to $L^1(J_T,L^2)$. It follows that for any~$\e>0$ there is a piecewise constant function $\tilde u_\e:J_T\to H_0^1$ such that
$$
\|f_{m2}-f_{m2}^\e\|_{L^1(J_T,L^2)}\le\e,
$$
where $f_{m2}^\e$ stands for the function given by~\eqref{02} with $\tilde u=\tilde u_\e$. It follows that $\nnorm{f_{m2}-f_{m2}^\e}{0}\le T\e$, and hence we can assume from the very beginning that~$\tilde u$ is piecewise constant. In other words, there is a partition $0=\tau_0<\tau_1<\cdots<\tau_N=T$ of the interval~$[0,T]$ and functions $u_n\in H_0^1$, $n=1,\dots,N$,  such that 
$$
f_{m2}(t,x)=B(u_n+\zeta_m)-\sum_{j=1}^k\alpha_j B(u_n+\zeta^j)
\quad\mbox{for $\tau_{n-1}\le t<\tau_n$}.
$$
Now note that if $[t_{k-1},t_k]\subset[\tau_{n-1},\tau_n]$, then
$$
\int_{t_{k-1}}^{t_k}f_{m2}(t,x)\,dt=0.
$$
Repeating the argument used for~$f_{m1}$, we easily prove that 
$\nnorm{f_{m2}}{0}\le C_3m^{-1}$ in the case when~$\tilde u$ is piecewise constant. Combining this with~\eqref{2.23}, we obtain the required convergence~\eqref{2.22}. The proof of Proposition~\ref{p2.4} is complete. 

\begin{proof}[Proof of Lemma~\ref{l5.1}]
It suffices to find functions $\eta,\tilde\zeta^j\in G$, $j=1,\dots,m$, such that
\begin{equation} \label{71}
\Bigl\|\eta_1-\eta+\sum_{j=1}^mB(\tilde\zeta^j)\Bigr\|\le\delta.
\end{equation}
If such vectors are constructed, then we can set $k=2m$,
$$
\alpha_j=\alpha_{j+m}=\frac{1}{2m},\quad 
\zeta^j=-\zeta^{j+m}=\sqrt{m}\,\tilde\zeta^j\quad\mbox{for $j=1,\dots,m$}. 
$$

To construct $\eta,\tilde\zeta^j\in G$ satisfying~\eqref{71}, note that if $\eta_1\in\FF(N,G)$, then there are functions $\tilde\eta_j,\xi_j\in G$ and $\xi'_j\in N$ such that
\begin{equation} \label{72}
\eta_1=\sum_{j=1}^m\bigl(\tilde\eta_j-\xi_j\p_x\xi'_j-\xi'_j\p_x\xi_j\bigr).
\end{equation}
Now note that, for any $\e>0$, 
$$
\xi_j\p_x\xi'_j+\xi'_j\p_x\xi_j
=B(\e\xi_j+\e^{-1}\xi'_j)-\e^2B(\xi_j)-\e^{-2}B(\xi'_j).
$$
Combining this with~\eqref{72}, we obtain
$$
\eta_1-\sum_{j=1}^m\bigl(\tilde\eta_j+\e^{-2}B(\xi'_j)\bigr)
+\sum_{j=1}^mB(\e\xi_j+\e^{-1}\xi'_j)=\e^2\sum_{j=1}^mB(\xi_j).
$$
Choosing $\e>0$ sufficiently small and setting
$$
\eta=\sum_{j=1}^m\bigl(\tilde\eta_j+\e^{-2}B(\xi'_j)\bigr), \quad
\tilde\zeta^j=\e\xi_j+\e^{-1}\xi'_j, 
$$
we arrive at~\eqref{71}. 
\end{proof}

\subsection{Saturation}
\label{s2.4}
Let us prove Lemma~\ref{l2.7} and the inclusion $B(E_1)\subset E_2$. For $\xi=\sin(jx)$ and $\xi'=\sin x$, we have
\begin{align} 
\xi\p_x\xi'+\xi'\p_x\xi
&=\sin(jx)\cos x+j\sin x\cos(jx)\notag\\
&=\frac12\bigl((j+1)\sin(j+1)x-(j-1)\sin(j-1)x\bigr).\label{3.3}
\end{align}
It follows that $B(E_1)\subset E_2$ and $\FF(E_1,E_k)\subset E_{k+1}$. Furthermore, taking $j=k$ in~\eqref{3.3}, we write
$$
\sin(k+1)x=\frac{k-1}{k+1}\sin(k-1)x+\frac{2}{k+1}\bigl(\sin(kx)\,\p_x\sin x+\sin x\,\p_x\sin(kx)\bigr). 
$$
This relation implies that the function $\sin(k+1)x$ belongs to~$\FF(E_1,E_k)$ and therefore $E_{k+1}\subset\FF(E_1,E_k)$.

\subsection{Case of a large control space}
\label{s2.5}
We wish to construct a control $\eta\in L^2(J_T,E_N)$ with a large integer~$N\ge2$ such that~\eqref{2.2} holds. To this end, consider a function $u_\mu$ defined as
$$
u_\mu(t,x)=T^{-1}\bigl(te^{\mu \p_x^2}\hat u+(T-t)e^{t \p_x^2}u_0\bigr),
$$
where $\mu>0$ is a small number that will be chosen below. The function~$u_\mu$ belongs to the space~$\XX_T$ and satisfies Eqs.~\eqref{2.1}, \eqref{1.2}, \eqref{1.3} in which
$$
\eta=\eta_\mu:=\p_tu_\mu-\nu\p^2_x u_\mu+u_\mu\p_xu_\mu-h.
$$
This function belongs to~$L^1(J_T,L^2)$. Furthermore, 
\begin{equation} \label{3.4}
\|u_\mu(T)-\hat u\|=\|e^{\mu \p_x^2}\hat u-\hat u\|\to0\quad \mbox{as $\mu\to0$}. 
\end{equation}
Choosing $\mu>0$ sufficiently small in~\eqref{3.4} and
approaching~$\eta_\mu\in L^1(J_T,L^2)$ by continuous $L^2$-valued
functions, we can find $\tilde\eta\in C(J_T,L^2)$ such that 
\begin{equation} \label{3.5}
\|\RR_T(u_0,h+\tilde\eta)-\hat u\|<\e.
\end{equation}

Let us denote by ${\mathsf P}_k:L^2\to L^2$ the orthogonal projection
in~$L^2$ onto the subspace~$E_k$. In view of the saturating
property~\eqref{2.7}, we have 
$$
\sup_{t\in[0,T]}\|{\mathsf P}_k\tilde\eta(t)-\tilde\eta(t)\|\to0\quad\mbox{as $k\to\infty$.}
$$
By continuity of~$\RR_t$, we obtain
$$
\|\RR_T(u_0,h+{\mathsf P}_k\tilde\eta)-\RR_T(u_0,h+\tilde\eta)\|\to0\quad\mbox{as $k\to\infty$.}
$$
Combining this with~\eqref{3.5}, we see that, for a sufficiently large $N\ge1$, the function
$\eta={\mathsf P}_N\tilde\eta$ satisfies~\eqref{2.2}. This completes
the proof of Theorem~\ref{t2.1}.

\section{Exact controllability of finite-dimensional functionals}
\label{s3}
\subsection{Main result}
\label{s3.1}
Let us introduce a controllability property which is stronger than the approximate  controllability. To this end, we first define the concept of a regular point for a continuous function. 

\begin{definition} \label{d3.1}
Let $X$ be a Banach space and let $F\!:X\to\R^N$ be a continuous function. We shall say that $\hat u\in X$ is a {\it regular point for~$F$} if there is a non-degenerate closed ball $B\subset\R^N$ centred at~$\hat y=F(\hat u)$ and a continuous mapping\footnote{Let us emphasise that $F^{-1}$ is just a notation.} $F^{-1}:B\to X$ such that $F^{-1}(\hat y)=\hat u$ and~$F^{-1}$ is the right inverse of~$F$ on~$F^{-1}(B)$:
\begin{equation} \label{5.1}
F(F^{-1}(y))=y\quad\mbox{for $y\in B$}. 
\end{equation}
\end{definition}

For instance, if $F:X\to\R^N$ is an analytic function such that $F(X_0)$ contain an open ball for some finite-dimensional affine subspace $X_0\subset X$, then the Sard theorem implies that almost every point $\hat u\in X_0$ is regular for~$F$. In particular, if~$F$ is a finite-dimensional projection in~$X$, then any point is regular for~$F$.

\begin{definition} \label{d3.2}
Let $E\subset L^2$ be a closed subspace. We shall say that the Burgers equation~\eqref{2.1} is {\it controllable at time~$T>0$ by an $E$-valued control\/} if for any continuous function $F:L^2\to\R^N$ the following property holds: for any initial function $u_0\in L^2$, any regular point $\hat u\in L^2$, and any $\e>0$ there is $\eta\in C^\infty(J_T,E)$ such that
\begin{gather}
\|\RR_T(u_0,h+\eta)-\hat u\|<\e, \label{5.2}\\
F\bigl(\RR_T(u_0,h+\eta)\bigr)=F(\hat u). \label{5.3}
\end{gather}
\end{definition}

Thus, the controllability property is stronger than the exact controllability in observed projection (cf.~\cite{AS-2005,AS-2008}), but is much weaker than the usual concept of exact controllability. 

\begin{theorem} \label{t3.3}
Let $h$ and $E$ be the same as in Theorem~\ref{t2.1}. Then Eq.~\eqref{2.1} is controllable at any time~$T>0$ by an $E$-valued control. 
\end{theorem}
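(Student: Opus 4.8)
The plan is to deduce Theorem~\ref{t3.3} from the approximate controllability result (Theorem~\ref{t2.1}) by adding a finite-dimensional correction that enforces the exact constraint~\eqref{5.3}, while keeping the solution within $\e$ of~$\hat u$. The regular point hypothesis is exactly what makes this correction possible: the continuous right inverse $F^{-1}:B\to L^2$ lets us realise any prescribed value $y\in B$ near $\hat y=F(\hat u)$ as $F(F^{-1}(y))=y$, so we only need to hit a \emph{target in $L^2$ close to $\hat u$} and then argue that the attainable values of $F$ cover a neighbourhood of $\hat y$ by a topological degree / Brouwer-type argument.

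\medskip
First I would fix $F$, $u_0$, a regular point $\hat u$, and $\e>0$, and let $B\subset\R^N$ be the non-degenerate ball centred at $\hat y$ furnished by Definition~\ref{d3.1}, with right inverse $F^{-1}$. By continuity of $F^{-1}$, I can shrink $B$ to a smaller ball $B_\rho$ of radius $\rho>0$ so that $\|F^{-1}(y)-\hat u\|<\e/2$ for all $y\in B_\rho$. For each target $y\in B_\rho$ the point $\hat u_y:=F^{-1}(y)\in L^2$ satisfies $F(\hat u_y)=y$; the idea is to steer, for every such $y$, the initial state $u_0$ to within a small controlled accuracy of $\hat u_y$ using a $C^\infty(J_T,E)$ control, and then to use a degree argument to select the particular $y$ for which the terminal value of $F$ is exactly $\hat y$.

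\medskip
The mechanism I would set up is a continuous finite-parameter family of controls. Using the construction in Section~\ref{s2.5} (the explicit ansatz $u_\mu$ and the projection of the associated forcing onto $E_N$, together with the saturation chain of Lemma~\ref{l2.7} and Proposition~\ref{p2.5}), the approximate controllability proof actually produces, for a given target, a control; the key point is that this construction can be made to depend \emph{continuously} on the target $\hat u_y$ and hence on $y\in B_\rho$. This yields a continuous map $\Phi:B_\rho\to C^\infty(J_T,E)$, $y\mapsto\eta_y$, such that $\|\RR_T(u_0,h+\eta_y)-\hat u_y\|$ is as small as we like uniformly in $y$. Composing with $F$ and using continuity of both $F$ and the resolving operator $\RR_T$ (Proposition~\ref{p1.3}), the map
\[
\Psi:B_\rho\to\R^N,\qquad \Psi(y)=F\bigl(\RR_T(u_0,h+\eta_y)\bigr)
\]
is continuous and satisfies $\|\Psi(y)-y\|=\|\Psi(y)-F(\hat u_y)\|$ small, uniformly on $B_\rho$. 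Thus $\Psi$ is a uniformly small continuous perturbation of the identity on the ball $B_\rho$, and a standard Brouwer-degree argument (the perturbed identity is surjective onto a slightly smaller concentric ball) guarantees a point $y^\ast\in B_\rho$ with $\Psi(y^\ast)=\hat y$, i.e.\ $F(\RR_T(u_0,h+\eta_{y^\ast}))=F(\hat u)$, which is~\eqref{5.3}. Since $\|\RR_T(u_0,h+\eta_{y^\ast})-\hat u\|\le\|\RR_T(u_0,h+\eta_{y^\ast})-\hat u_{y^\ast}\|+\|\hat u_{y^\ast}-\hat u\|<\e$, the approximation bound~\eqref{5.2} holds as well.

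\medskip
The main obstacle I anticipate is establishing the \emph{continuous dependence of the control $\eta_y$ on the target}, with a uniform approximation error over the whole ball $B_\rho$. The proof of Theorem~\ref{t2.1} as presented produces a control for each fixed target through several non-canonical choices (the small parameters $\mu$, $\delta$, $\e$ in the convexification and large-control-space steps, and the integer $N$), and one must check that these can be fixed \emph{uniformly} for $y$ ranging over the compact set $\overline{B_\rho}$ and that the resulting map $y\mapsto\eta_y$ is genuinely continuous into $C^\infty(J_T,E)$. Here compactness of $\overline{B_\rho}$ and continuity of $F^{-1}$ are essential: they reduce the uniformity to a compactness argument, so that a single $N$ and single smallness thresholds work for all targets simultaneously. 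Once this uniform, continuous family is in hand, the degree argument is routine and completes the proof.
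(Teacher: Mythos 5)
Your proposal is correct and follows essentially the same route as the paper: the paper also reduces Theorem~\ref{t3.3} to a \emph{uniform} approximate controllability statement (Theorem~\ref{t3.5}, with a control map depending continuously on initial and target states over the compact set $\KK=F^{-1}(B)\cup\{u_0\}$) and then applies the Brouwer fixed-point theorem to the perturbed identity $y\mapsto y-F(\RR_T(u_0,h+\varPsi\circ F^{-1}(y)))+\hat y$ on the ball $B$. The obstacle you single out --- making the control depend continuously on the target with uniformly small error --- is exactly the content of the paper's Theorem~\ref{t3.5}, whose proof occupies Sections~\ref{s3.5}--\ref{s3.4} and requires redoing the extension and convexification steps with parameters rather than a bare compactness argument.
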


The proof of this result is outlined in the next subsection, and the details are given in Sections~\ref{s3.5}--\ref{s3.4}. 

\subsection{Reduction to a uniform approximate controllability}
\label{s3.2}
The proof of Theorem~\ref{t3.3} is based on the property of {\it uniform approximate controllability\/}. 

\begin{definition} \label{d3.4}
We shall say that Eq.~\eqref{2.1} is {\it uniformly approximately controllable at time~$T$ by an $E$-valued control\/} if for any $\e>0$ and any compact set $\KK\subset L^2$ there is a continuous mapping $\varPsi:\KK\times \KK\to L^2(J_T,E)$ such that 
\begin{gather}
\varPsi(\KK\times\KK)\subset C^\infty(J_T,E),\label{5.4}\\
\sup_{u_0,\hat u\in \KK}\,\bigl\|\RR_T(u_0,h+\varPsi(u_0,\hat u))-\hat u\bigr\|<\e. 
\label{5.5}
\end{gather}
\end{definition}

Thus, the uniform approximate controllability can be regarded as a parameter version of the approximate controllability. The following result is an analogue of Theorem~\ref{t2.1} for this concept. 

\begin{theorem} \label{t3.5}
Under the hypotheses of Theorem~\ref{t2.1}, Eq.~\eqref{2.1} is uniformly approximately controllable  at any time $T>0$ by an $E$-valued control. 
\end{theorem}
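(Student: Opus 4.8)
The plan is to upgrade Theorem~\ref{t2.1} to its parametric version by re-examining the four-step proof of approximate controllability and checking that every construction can be made to depend continuously on the pair of endpoints $(u_0,\hat u)$ ranging over a compact set $\KK$. The guiding principle is that each step of the earlier argument produced a control out of the data through operations that are in fact continuous; the task is to bookkeep this continuity uniformly. I would begin at the end of the chain—Step~4, the case of a large control space—since that is where an explicit control is first written down. There the control was obtained from the interpolating function
\begin{equation*}
u_\mu(t,x)=T^{-1}\bigl(te^{\mu\p_x^2}\hat u+(T-t)e^{t\p_x^2}u_0\bigr),
\end{equation*}
which depends \emph{linearly}, hence continuously, on $(u_0,\hat u)$; the associated forcing $\eta_\mu$ and the projected control ${\mathsf P}_N\tilde\eta$ inherit this continuity. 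The key point is that the smallness of $\mu$ and the index $N$ needed to achieve~\eqref{5.2} can be chosen uniformly over the compact set $\KK$, because the convergences in~\eqref{3.4} and in the projection estimate are uniform on compacts by a standard equicontinuity argument for the heat semigroup and for $\{{\mathsf P}_k\}$.

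First I would therefore construct, for the ``large control space'' $E_N$, a continuous map $\varPsi_N:\KK\times\KK\to C^\infty(J_T,E_N)$ satisfying~\eqref{5.5} with $E$ replaced by $E_N$; this is essentially a parametrised rereading of Section~\ref{s2.5}, where one must smooth $\eta_\mu$ in a way that varies continuously with the parameters (e.g.\ by convolving in time with a fixed mollifier, which is a bounded linear operation). The next and central task is to descend from $E_N$ back to $E=E_2$ by iterating the saturation step, showing that the existence of a continuous $\varPsi_{k+1}$ achieving~\eqref{5.5} implies the existence of a continuous $\varPsi_k$ doing the same. This is the parametric analogue of the equivalence furnished by Proposition~\ref{p2.5} together with Lemma~\ref{l2.7}.

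The hard part will be this descent, because the passage from an $\FF(N,G)$-valued control to a $G$-valued one (Proposition~\ref{p2.4}) runs through Lemma~\ref{l5.1} and the fast-oscillation construction $\zeta_m(t)=\zeta(mt/T)$, and one must verify that all the ingredients—the decomposition~\eqref{72} of $\eta_1$, the choice of $\e$ and of the vectors $\eta,\tilde\zeta^j$, the number $k$ of subintervals, and finally the oscillation index $m$—can be selected continuously in, and uniformly over, the parameters. Here Proposition~\ref{p1.5} is the crucial tool: the H\"older estimate~\eqref{1.10} with the relaxation norm $\nnorm{\cdot}{0}$ is uniform on balls, so once the piecewise-constant intermediate control depends continuously on $(u_0,\hat u)$ and the bound $\nnorm{f_m}{0}\le Cm^{-1}$ holds with $C$ uniform over $\KK$, the convergence~\eqref{50} becomes uniform and a single large $m$ works for the whole compact set. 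The main technical obstacle is the continuous dependence of the decomposition in Lemma~\ref{l5.1}: the representation~\eqref{72} of $\eta_1\in\FF(N,G)$ in terms of $\tilde\eta_j,\xi_j,\xi'_j$ is a priori non-unique, so I would fix it by choosing, once and for all, a linear right inverse of the (linear, surjective) map $(\tilde\eta_j,\xi_j,\xi'_j)\mapsto\eta_1$ onto the finite-dimensional space $\FF(N,G)$, thereby making the coefficients linear—and hence continuous—functions of $\eta_1$. Since the only parameter-dependence in the reduction enters through the first control $\eta_1$ built in Step~4, continuity in $\eta_1$ propagates to continuity in $(u_0,\hat u)$, and composing the finitely many descent maps $\varPsi_N\mapsto\varPsi_{N-1}\mapsto\cdots\mapsto\varPsi_2$ yields the desired continuous $\varPsi:\KK\times\KK\to C^\infty(J_T,E)$ satisfying~\eqref{5.4}--\eqref{5.5}, completing the proof.
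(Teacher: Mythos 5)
Your overall strategy---parametrise the four steps of the proof of Theorem~\ref{t2.1} and track continuity in $(u_0,\hat u)$---is exactly the paper's, and your treatment of Step~4 (linearity of $u_\mu$ in the endpoints, uniform choice of $\mu$ and $N$ over the compact set) matches Section~\ref{s3.4}. The gap is at the point you yourself single out as the hard one. The map $(\tilde\eta_j,\xi_j,\xi'_j)\mapsto\sum_j\bigl(\tilde\eta_j-\xi_j\p_x\xi'_j-\xi'_j\p_x\xi_j\bigr)$ appearing in~\eqref{72} is \emph{not} linear---it is quadratic in the $\xi$'s---so ``a linear right inverse of the (linear, surjective) map'' does not exist, and the continuous selection you need inside Lemma~\ref{l5.1} is not actually constructed. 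The paper sidesteps this issue entirely: by Proposition~\ref{p4.1} the parameter-dependent control can be approximated by $\sum_{r,l}c_{lr}(y)I_{r,s}(t)\eta^l$ with \emph{fixed} vectors $\eta^l=\pm Ce_l$ and only scalar convex coefficients $c_{lr}(y)$ depending continuously on $y=(u_0,\hat u)$; Lemma~\ref{l5.1} is then applied only to the finitely many fixed $\eta^l$, all parameter dependence is carried by the coefficients $b_i(y)$ of the auxiliary equation~\eqref{5.17}, and the uniform fast-oscillation limit is proved once and for all in Proposition~\ref{p5.6}. (Your idea could be repaired without Proposition~\ref{p4.1}---for instance, fix a basis of $\FF(N,G)$ consisting of elements of the form~\eqref{2.6} and absorb the linear coordinates of $\eta_1$ into the factors $\xi_j$, which keeps the data depending continuously on $\eta_1$---but that is an argument you would still have to supply.)

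A second, smaller omission: the reduction to piecewise constant controls and the subsequent concatenation over the intervals of constancy is not automatic in the parametric setting. The error committed on each subinterval perturbs the initial condition for the next one, uniformly over $\KK$, so one needs a stability statement such as Lemma~\ref{l3.8} together with the backward choice of tolerances $\e_s,\dots,\e_1$ carried out in Step~3 of the proof of Proposition~\ref{p3.7}; the claim that ``a single large $m$ works for the whole compact set'' on each piece is necessary but not sufficient without this bookkeeping.
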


We claim that if Eq.~\eqref{2.1} is uniformly approximately controllable at time~$T$ by an $E$-valued control, then it is controllable. Indeed, let $\hat u\in L^2$ be a regular point for a continuous function $F:L^2\to\R^N$, let $u_0\in L^2$ be an initial function, and let~$\e>0$. We wish to construct a control $\eta\in C^\infty(J_T,E)$ such that~\eqref{5.2} and~\eqref{5.3} hold. 

By the definition of a regular point, there is a ball $B\subset\R^N$ centred at the point $\hat y=F(\hat u)$ and a continuous function $F^{-1}:B\to L^2$ such that $F^{-1}(\hat y)=\hat u$ and~\eqref{5.1}  holds. Without loss of generality, we can assume that the radius~$r$ of the ball~$B$ is so small that 
\begin{equation} \label{5.6}
\sup_{y\in B}\|F^{-1}(y)-\hat u\|<\frac{\e}{2}.
\end{equation}
Denote $\KK=F^{-1}(B)\cup\{u_0\}$, so that~$\KK$ is a compact subset of~$L^2$. Let us choose a number $\delta\in(0,\e/2)$ such that 
\begin{equation} \label{5.7}
\|F(u_1)-F(u_2)\|\le r\quad
\mbox{for $u_1,u_2\in \KK$, $\|u_1-u_2\|\le\delta$}. 
\end{equation}
Theorem~\ref{t3.5} implies that there is a continuous mapping $\varPsi:\KK\to L^2(J_T,E)$ with range in $C^\infty(J_T,E)$ such that
\begin{equation} \label{5.8}
\sup_{v\in\KK}\bigl\|\RR_T\bigl(u_0,h+\varPsi(v)\bigr)-v\bigr\|<\delta. 
\end{equation}
Consider the mapping $\varPhi:B\to\R^N$ defined by 
$$
\varPhi(y)=F\bigl(\RR_T(u_0,h+\varPsi\circ F^{-1}(y))\bigr).
$$
It follows from~\eqref{5.7} that
$$
\sup_{y\in B}\|\varPhi(y)-y\|=\sup_{y\in B}
\bigl\|F\bigl(\RR_T(u_0,h+\varPsi\circ F^{-1}(y))\bigr)-F\bigl(F^{-1}(y)\bigr)\bigr\|
\le r. 
$$
Thus, applying the Brouwer theorem to the mapping $\Gamma:B\to B$ taking~$y$ to~$y-\varPhi(y)+\hat y$, we can find $\bar y\in B$ such that $\varPhi(\bar y)=\hat y$. This equality coincides with relation~\eqref{5.3} in which $\eta=\varPsi\circ F^{-1}(\bar y)$. Furthermore, setting $\bar u=F^{-1}(\bar y)$ and using~\eqref{5.6} and~\eqref{5.8}, we obtain
$$
\|\RR_T(u_0,h+\eta)-\hat u\|
\le \|\RR_T(u_0,h+\varPsi(\bar u))-\bar u\|+\|F^{-1}(\bar y)-\hat u\|
<\delta+\frac{\e}{2}<\e. 
$$

Thus, it suffices to prove Theorem~\ref{t3.5}.  To this end, we repeat the scheme used in Section~\ref{s2}, following carefully the dependence of controls on the initial and final points. Namely, let us fix~$\e>0$, a compact set $\KK\subset L^2$, and a finite-dimensional subspace $G\subset L^2$. We say that Eq.~\eqref{2.1} is {\it $(\e,\KK)$-controllable by a $G$-valued control\/} if there is a continuous mapping $\varPsi:\KK\times \KK\to L^2(J_T,G)$ satisfying~\eqref{5.4} with $E=G$ and~\eqref{5.5}. We shall prove that some analogues of Propositions~\ref{p2.3} and~\ref{p2.4} are true for $(\e,\KK)$-controllability. Once they are established, the required result will follow from the saturating property and the fact that~\eqref{2.1} is $(\e,\KK)$-controllable by an $E_N$-valued control with a sufficiently large~$N$. 

The realisation of the above scheme is based on a result on uniform approximation of solutions for a Burgers-type equation. It is given in the next subsection. The proof of Theorem~\ref{t3.5} is presented in Sections~\ref{s3.3} and~\ref{s3.4}. 

\subsection{Uniform approximation of solutions}
\label{s3.5}
Let $(\CC,d_\CC)$ be a compact metric space and let $b_i:\CC\to\R_+$, $i=1,\dots,q$, be continuous functions such that
\begin{equation} \label{5.41}
\sum_{i=1}^q b_i(y)=1\quad\mbox{for all $y\in\CC$}. 
\end{equation}
Let us fix some functions $\zeta^i\in H^2\cap H_0^1$, $i=1,\dots,q$, and  consider the following Burgers-type equation depending on the parameter $y\in\CC$:
\begin{equation} \label{5.42}
\p_t u-\nu \p_x^2u+\sum_{i=1}^q
b_i(y)\bigl(B(u+\zeta^i(x))-\nu\p_x^2\zeta^i(x)\bigr)=f(t,x).
\end{equation}
For any $y\in \CC$ and $u_0\in L^2$, this equation has a unique solution $u\in \XX_T$ issued from~$u_0$. Let us denote by $\sS:\CC\times L^2\times L^1(J_T,L^2)\to\XX_T$ a mapping that takes the triple $(y,u_0,f)$ to the solution~$u$ of problem~\eqref{5.42}, \eqref{1.2}. Recall that~$\widehat\RR$ stands for the resolving operator of Eq.~\eqref{2.3}. The following result shows that the solutions of~\eqref{5.42} can be approximated by those of~\eqref{2.3}. 

\begin{proposition} \label{p5.6}
Under the above hypotheses, for any positive numbers~$R$, $T$, and~$\e$ there is a continuous function $\varPsi:\CC\to L^2(J_T, H^2)$ such that 
\begin{gather}
\varPsi(t;y)\in\{\zeta^1,\dots,\zeta^q\}\quad
\mbox{for all $y\in\CC$, $t\in J_T$}, \label{5.43}\\
\sup_{y,u_0,f}\,\bigl\|\widehat\RR\bigl(u_0,f,\varPsi(y)\bigr)
-\sS(y,u_0,f)\bigr\|_{\XX_T}\le\e,\label{5.44}
\end{gather}
where the supremum is taken over $y\in\CC$, $u_0\in L^2$, and $f\in L^1(J_T,L^2)$ such that $\|u_0\|\le R$ and $\|f\|_{L^1(J_T,L^2)}\le R$. 
\end{proposition}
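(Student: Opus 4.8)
The plan is to realise $\varPsi(y)$ as a rapidly oscillating, piecewise-constant-in-time control that reproduces the convex weights $b_i(y)$, and then to re-run the proof of Proposition~\ref{p2.4} while tracking uniformity in the data. Concretely, for an integer $m\ge1$ I would set $t_k=kT/m$ and, on each period $[t_{k-1},t_k]$, let $\varPsi_m(t;y)$ take the value $\zeta^i$ on the sub-interval of length $(T/m)\,b_i(y)$ obtained by subdividing the period in the order $i=1,\dots,q$. This gives~\eqref{5.43} at once, and $\varPsi_m(y)\in L^2(J_T,H^2)$ since it takes finitely many values in $H^2$. The map $y\mapsto\varPsi_m(y)$ is continuous: the breakpoints $t_{k-1}+(T/m)(b_1(y)+\dots+b_{i-1}(y))$ depend continuously on $y$ because the $b_i$ do, so a small change of $y$ moves the level sets by a set of small measure and hence changes $\varPsi_m(y)$ little in $L^2(J_T,H^2)$ (a vanishing $b_i$ merely produces an empty sub-interval and causes no jump).

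With $\zeta_m=\varPsi_m(y)$, the solution $u=\sS(y,u_0,f)$ satisfies the extended equation~\eqref{2.3} with control $\zeta_m$ and right-hand side $f+f_m$, where $f_m=f_{m1}+f_{m2}$ is given as in~\eqref{01}--\eqref{02} with $\alpha_j,\zeta^j$ replaced by $b_i(y),\zeta^i$; that is, $\sS(y,u_0,f)=\widehat\RR(u_0,f+f_m,\zeta_m)$. By the energy estimate~\eqref{1.4} all these solutions lie in a fixed ball of $\XX_T$ depending only on $R$, $T$, and $\max_i\|\zeta^i\|_2$, and $\zeta_m$ is bounded in $L^2(J_T,H^2)$ uniformly in $y$ and $m$. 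Hence, exactly as in Step~3 of the proof of Proposition~\ref{p2.4}, Proposition~\ref{p1.5} reduces~\eqref{5.44} to showing that $\nnorm{f_m}{0}\to0$ (or, what will be more convenient, $\nnorm{f_m}{-1}\to0$) as $m\to\infty$, \emph{uniformly} over $y\in\CC$, $\|u_0\|\le R$, and $\|f\|_{L^1(J_T,L^2)}\le R$.

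The term $f_{m1}$ is harmless: it is independent of $(u_0,f)$, and since $\int_{t_{k-1}}^{t_k}\zeta_m\,ds=(T/m)\sum_i b_i(y)\zeta^i$ it has zero mean over every period, so its running integral reduces to the last partial period and $\nnorm{f_{m1}}{0}\le C/m$ uniformly in $y$. The genuine difficulty is $f_{m2}(s)=\sum_i\theta_i^m(y;s)\,B(u(s)+\zeta^i)$, where $\theta_i^m=\mathbf 1_{\{\zeta_m=\zeta^i\}}-b_i(y)$ is bounded by $1$ and mean-zero over each period. For a single datum this is handled as in Step~4 of Proposition~\ref{p2.4} by approximating $u$ by step functions in time; the main obstacle here is that the data range over the \emph{non-compact} balls of $L^2$ and $L^1(J_T,L^2)$, so the mesh of that approximation cannot be chosen uniformly by an elementary argument.

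I would overcome this by a compactness argument. The family $\{u=\sS(y,u_0,f)\}$ is bounded in $L^2(J_T,H^1)$, while the equation shows $\{\p_t u\}$ is bounded in $L^1(J_T,H^{-1})$; by the Aubin--Lions--Simon lemma it is therefore relatively compact in $L^2(J_T,L^2)$. Since $B(w)=\tfrac12\p_x(w^2)$ and, on the ball where the solutions live, $\|a^2-b^2\|_{L^1(J_T,L^2)}\le C\|a-b\|_{L^2(J_T,L^2)}^{3/4}$, the family $\{B(u+\zeta^i)\}$ is then relatively compact in $L^1(J_T,H^{-1})$. Covering it by a finite $\e$-net, approximating each net element by a time-step function, and applying the period-cancellation estimate to each of these finitely many functions, I obtain $\sup\nnorm{f_{m2}}{-1}\to0$ uniformly in the data. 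Feeding $\nnorm{f_m}{-1}\to0$ into the interpolation argument of Proposition~\ref{p1.5} — which, for perturbations lying in $L^2(J_T,H^{-1})$ (note $f_{m2}$ does, cf.\ Remark~\ref{r1.2}), bounds $\|\widehat\RR(u_0,f,\zeta_m)-\widehat\RR(u_0,f+f_m,\zeta_m)\|_{\XX_T}$ by a positive power of $\nnorm{f_m}{-1}$ — then yields~\eqref{5.44} and completes the proof. The crux, and the only ingredient genuinely beyond the single-datum argument of Proposition~\ref{p2.4}, is this Aubin--Lions compactness that converts the pointwise step-function approximation into a uniform one.
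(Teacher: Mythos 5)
Your construction of the oscillating control $\varPsi^m$ and the reduction, via the decomposition $f_m=f_{m1}+f_{m2}$ and Proposition~\ref{p1.5}, to the uniform smallness of the relaxation norm of~$f_m$ is exactly the paper's Step~1; the treatment of~$f_{m1}$ and the continuity of $y\mapsto\varPsi^m(y)$ also match. Where you genuinely diverge is in how uniformity over the non-compact balls $\|u_0\|\le R$, $\|f\|_{L^1(J_T,L^2)}\le R$ is obtained. The paper argues that the maps $(y,u_0,f)\mapsto f_m$ are uniformly in~$m$ equicontinuous with values in $L^1(J_T,L^2)$ --- reducing this to the continuity of the solution map into $L^2(J_T,H^1)$ and to the explicit bound $\|\varPsi^m(y_1)-\varPsi^m(y_2)\|_{L^2(J_T,H^1)}^2\le C\sum_i|b_i(y_1)-b_i(y_2)|$ --- and combines this with the pointwise convergence already established in Proposition~\ref{p2.4}. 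You instead extract relative compactness of the solution family in $L^2(J_T,L^2)$ by Aubin--Lions and run a finite-net argument on $\{B(u+\zeta^i)\}$. Your route is more self-contained on precisely the point where the paper is terse (equicontinuity plus pointwise convergence gives uniform convergence only on totally bounded parameter sets, so some compactness of the solution family must enter in any case), at the price of an extra functional-analytic ingredient.

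The one soft spot is your final step. Proposition~\ref{p1.5}, as stated and as proved, gives a H\"older bound in the norm $\nnorm{\cdot}{0}$ for perturbations bounded in $L^2(J_T,L^2)$: its proof interpolates between $\|Kf\|_{C(J_T,H^{-1})}\le C\nnorm{f}{0}$ and the bound $\|Kf\|_{C(J_T,H^1)}+\|Kf\|_{L^2(J_T,H^2)}\le C$, and the latter fails when $f$ is only bounded in $L^2(J_T,H^{-1})$; so the parenthetical appeal to Remark~\ref{r1.2} does not by itself justify a bound by a power of $\nnorm{f_m}{-1}$. This is repairable (interpolate $F(t)=\int_0^tf\,ds$ between $H^{-1}$ and $L^2$ and use the smoothing of the heat semigroup), but it is easier to avoid the $H^{-1}$ relaxation norm altogether: since $B(a)-B(b)=(a-b)\,\p_xa+b\,\p_x(a-b)$ and the 1D embedding gives $\|B(a)-B(b)\|_{L^1(J_T,L^2)}\le C\|a-b\|_{L^2(J_T,L^2)}^{1/2}$ on bounded subsets of~$\XX_T$, your compactness argument runs verbatim in $L^1(J_T,L^2)$ and yields $\sup\nnorm{f_{m2}}{0}\to0$, after which Proposition~\ref{p1.5} applies exactly as stated.
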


\begin{proof}
We repeat the argument used in Step~3 of the proof of Proposition~\ref{p2.4}. The main point is to follow carefully the dependence on the parameter~$y$ and the functions~$u_0$ and~$f$. 

\smallskip
{\it Step~1}. 
Define a sequence of mappings $\varPsi^m:\CC\to L^2(J,H^2)$ by the formula 
$$
\varPsi^m(t;y)=\zeta(mt/T;y),
$$
where $\zeta=\zeta(t;y)$ is a $1$-periodic function depending on the parameter~$y$ such that 
$$
\zeta(t;y)=\zeta^i\quad\mbox{for}\quad 
0\le t-(b_1(y)+\cdots+b_{i-1}(y))<b_i(y), \quad i=1,\dots,q. 
$$
The continuity of the functions~$b_i$ implies that $\varPsi^m$ is also continuous. Let us denote by~$u(y)=u(y,u_0,f)\in\XX_T$ the solution of~\eqref{5.42}, \eqref{1.2} and rewrite Eq.~\eqref{5.42} in the form 
$$
\p_t u(y)-\nu \p_x^2\bigl(u(y)+\varPsi^m(y)\bigr)+B\bigl(u(y)+\varPsi^m(y)\bigr)
=f(t,x)+f_m(t,x;y,u_0,f),
$$
where $f_m(t,x;y,u_0,f)=f_{m1}(t,x;y)+f_{m2}(t,x;y,u_0,f)$, and the functions~$f_{m1}$ and~$f_{m2}$ are defined by formulas~\eqref{01} and~\eqref{02} in which~$\zeta_m$ and~$\tilde u$ are replaced by~$\varPsi^m(y)$ and $u(y,u_0,f)$, respectively. Since the norm of~$\varPsi^m(y)$ in $L^2(J_T,H^2)$ is bounded for $m\ge1$ and $y\in\CC$, Proposition~\ref{p1.5} implies that 
$$
\|u^m(y,u_0,f)-u(y,u_0,f)\|_{\XX_T}
\le C\,\nnorm{f_m(y,u_0,f)}{0}^{1/3},
$$
where $u^m=u^m(y,u_0,f)=\widehat\RR(u_0,f,\varPsi^m(y))$. Thus, Proposition~\ref{p5.6} will be proved if we show that 
$$
\sup_{y,u_0,f}\nnorm{f_m(y,u_0,f)}{0}\to0\quad\mbox{as}\quad m\to\infty.
$$
The fact that the relaxation norm of each function~$f_m(y,u_0,f)$ goes to zero as $m\to\infty$ was established in Step~4 of the proof of Proposition~\ref{p2.4}. To prove that the convergence is uniform in $(y,u_0,f)$, it suffices to prove that the family of mappings $\fff_m:\CC\times L^2\times L^1(J_T,L^2)\mapsto L^1(J,L^2)$ taking~$(y,u_0,f)$ to~$f_m(y,u_0,f)$ is uniformly equicontinuous, that is,
\begin{equation} \label{5.23}
\sup_{m\ge1}\|f_m(y_1,u_{01},f_1)-f_m(y_2,u_{02},f_2)\|_{L^1(J,L^2)}\to0,
\end{equation}
as $d_\CC(y_1,y_2)+\|u_{01}-u_{02}\|+\|f_1-f_2\|_{L^1(J_T,L^2)}\to0$. 

\smallskip
{\it Step~2}. 
Since the bilinear term $B(u)=u\p_xu$ is continuous from~$H^1$ to~$L^2$, it follows from relation~\eqref{02} with $\tilde u=u(y,u_0,f)$ and $\zeta_m=\varPsi^m(y)$ that convergence~\eqref{5.23} will be proved if we show that 
\begin{equation} \label{5.24}
\|u(y_1,u_{01},f_1)-u(y_2,u_{02},f_2)\|_{L^2(J,H^1)}
+\sup_{m\ge1}\|\varPsi^m(y_1)-\varPsi^m(y_2)\|_{L^2(J,H^1)}\to0.
\end{equation}
The fact that the first term goes to zero follows immediately from the continuous dependence of solutions for~\eqref{5.42} on the problem data. Thus, we shall concentrate on the second term. 

In view of the definition of~$\varPsi^m$ and the periodicity of~$\zeta(t;y)$, we have
\begin{align*}
\|\varPsi^m(y_1)-\varPsi^m(y_2)\|_{L^2(J,H^1)}^2
&=\int_0^T\|\zeta(mt/T;y_1)-\zeta(mt/T;y_2)\|_1^2dt\\
&=T\int_0^1\|\zeta(t;y_1)-\zeta(t;y_2)\|_1^2dt\\
&\le C\sum_{i=1}^q|b_i(y_1)-b_i(y_2)|. 
\end{align*}
Since the continuous functions~$b_i$ are uniformly continuous on the compact space~$\CC$, we see that the second term in~\eqref{5.24} goes to zero as $d_\CC(y_1,y_2)\to0$. This completes the proof of Proposition~\ref{p5.6}. 
\end{proof}

\subsection{Extension and convexification with parameters}
\label{s3.3}
Let us consider the controlled equation~\eqref{2.3}. Given a number $\e>0$, a compact set~$\KK\subset L^2$, and a finite-dimensional subspace $G\subset H^2$, we say that Eq.~\eqref{2.3} is {\it $(\e,\KK)$-controllable by $G$-valued controls\/} if there exist two continuous functions $\varPsi_1,\varPsi_2:\KK\times\KK\to L^2(J_T,G)$ such that
\begin{gather}
\varPsi_i(\KK\times\KK)\subset C^\infty(J_T,G),\quad
i=1,2,\label{5.9}\\
\sup_{u_0,\hat u\in \KK}\,
\bigl\|\widehat\RR_T(u_0,h+\varPsi_1(u_0,\hat u),\varPsi_2(u_0,\hat u))-\hat u\bigr\|<\e.
\label{5.10}
\end{gather}
The following result is a parameter version of Proposition~\ref{p2.3}.

\begin{proposition} \label{p3.6}
Let $G\subset H_0^1\cap H^2$. Then~\eqref{2.1} is $(\e,\KK)$-controllable by a $G$-valued control if and only if so is~\eqref{2.3}.
\end{proposition}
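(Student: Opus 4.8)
The plan is to mimic the proof of Proposition~\ref{p2.3} given in Section~\ref{s2.2}, carrying the construction through continuously in the parameter $(u_0,\hat u)\in\KK\times\KK$. The forward implication is immediate: if~\eqref{2.1} is $(\e,\KK)$-controllable by a $G$-valued control, with a continuous $\varPsi:\KK\times\KK\to L^2(J_T,G)$ satisfying~\eqref{5.4} (with $E=G$) and~\eqref{5.5}, then I would set $\varPsi_1=\varPsi$ and $\varPsi_2\equiv0$. Since with $\zeta\equiv0$ Eq.~\eqref{2.3} reduces to~\eqref{2.1}, one has $\widehat\RR_T(u_0,h+\varPsi_1(u_0,\hat u),0)=\RR_T(u_0,h+\varPsi(u_0,\hat u))$, so~\eqref{5.10} follows at once from~\eqref{5.5}, and the maps $\varPsi_1,\varPsi_2$ are continuous with range in $C^\infty(J_T,G)$.

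For the converse, suppose~\eqref{2.3} is $(\e,\KK)$-controllable, with continuous $\varPsi_1,\varPsi_2:\KK\times\KK\to L^2(J_T,G)$ satisfying~\eqref{5.9}, \eqref{5.10}. Following the non-parametric argument, the control for~\eqref{2.1} should be $\varPsi=\varPsi_1+\p_t\zeta$, where $\zeta$ is a modification of $\varPsi_2$ vanishing at $t=0,T$: then $u(t)=\widehat\RR_t(u_0,h+\varPsi_1(u_0,\hat u),\zeta(u_0,\hat u))+\zeta(t;u_0,\hat u)$ lies in $\XX_T$, solves~\eqref{2.1} with $\eta=\varPsi(u_0,\hat u)$, satisfies $u(0)=u_0$ and $u(T)=\widehat\RR_T(u_0,h+\varPsi_1(u_0,\hat u),\zeta(u_0,\hat u))$, so by uniqueness (Theorem~\ref{t1.1}) the bound~\eqref{5.10} for $\zeta$ yields~\eqref{5.5}. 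Two things must be arranged to make this work with a parameter: $\zeta$ must depend continuously on $(u_0,\hat u)$ \emph{together with its time derivative} (as $\p_t\zeta$ enters $\varPsi$), and replacing $\varPsi_2$ by $\zeta$ must not destroy~\eqref{5.10}.

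The main obstacle is the first point: the hypothesis only gives continuity of $\varPsi_2$ into $L^2(J_T,G)$, so $\p_t\varPsi_2$ need not depend continuously on the parameter. I would resolve this by first smoothing in time. Fix a smoothing operator $S_\sigma$ on $L^2(J_T,G)$ (e.g.\ convolution in $t$ with a mollifier after extension by zero) that maps $L^2(J_T,G)$ into $C^\infty(J_T,G)$, such that $S_\sigma:L^2(J_T,G)\to C^1(J_T,G)$ is bounded, $\|S_\sigma\|_{L^2\to L^2}\le1$, and $S_\sigma\to\mathrm{Id}$ strongly as $\sigma\to0$; and fix a smooth cut-off $\chi_\rho:J_T\to[0,1]$ equal to $1$ on $[\rho,T-\rho]$ and vanishing near the endpoints. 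Set $\zeta^{\sigma,\rho}(u_0,\hat u)=\chi_\rho\,S_\sigma\varPsi_2(u_0,\hat u)$. Then $\zeta^{\sigma,\rho}(u_0,\hat u)\in C^\infty(J_T,G)$ vanishes at $t=0,T$; and since $S_\sigma:L^2(J_T,G)\to C^1(J_T,G)$ is bounded and $\varPsi_2$ is $L^2$-continuous, the map $(u_0,\hat u)\mapsto\zeta^{\sigma,\rho}(u_0,\hat u)$ is continuous into $C^1(J_T,G)$, whence both $\zeta^{\sigma,\rho}$ and $\p_t\zeta^{\sigma,\rho}$ are continuous into $L^2(J_T,G)$.

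It remains to fix $\sigma,\rho$ small so that~\eqref{5.10} survives. Here I would use that $\varPsi_2(\KK\times\KK)$ is a compact subset of $L^2(J_T,G)$ (continuous image of a compact set), together with the standard fact that uniformly bounded, strongly convergent operators converge uniformly on compact sets. This gives $\sup_{u_0,\hat u}\|S_\sigma\varPsi_2-\varPsi_2\|_{L^2(J_T,G)}\to0$ as $\sigma\to0$ and $\sup_{u_0,\hat u}\|\chi_\rho\varPsi_2-\varPsi_2\|_{L^2(J_T,G)}\to0$ as $\rho\to0$ (the latter by dominated convergence), hence $\sup_{u_0,\hat u}\|\zeta^{\sigma,\rho}(u_0,\hat u)-\varPsi_2(u_0,\hat u)\|_{L^2(J_T,G)}\to0$. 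Since $G$ is finite-dimensional this smallness also holds in the $L^2(J_T,H^2)$-norm, and the ranges of $\zeta^{\sigma,\rho}$ stay in a fixed bounded set (as $\|S_\sigma\|\le1$ and $\|\chi_\rho\|_\infty\le1$); so by the uniform Lipschitz continuity of $\widehat\RR_T$ in $\zeta$ on bounded subsets (Remark~\ref{r1.3}), the supremum in~\eqref{5.10} changes by $o(1)$ when $\varPsi_2$ is replaced by $\zeta^{\sigma,\rho}$. As~\eqref{5.10} holds with strict inequality, it persists for $\zeta=\zeta^{\sigma,\rho}$ once $\sigma,\rho$ are small enough. Setting $\varPsi=\varPsi_1+\p_t\zeta^{\sigma,\rho}$ — continuous into $L^2(J_T,G)$ with range in $C^\infty(J_T,G)$ — and invoking the identity and uniqueness computation of the second paragraph then establishes~\eqref{5.5}, completing the converse.
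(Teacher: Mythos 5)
Your proof is correct and takes essentially the same route as the paper: the forward direction with $\zeta\equiv0$, and for the converse the identity $\widehat\RR(u_0,h+\varPsi_1(y),\zeta(y))+\zeta(y)=\RR(u_0,h+\varPsi_1(y)+\p_t\zeta(y))$ after regularizing $\varPsi_2$ in time so that $\p_t\zeta$ depends continuously on $y=(u_0,\hat u)$. The only difference is the regularization device: the paper invokes Proposition~\ref{p4.2} to replace $\varPsi_1,\varPsi_2$ by maps into a finite-dimensional subspace of $C_0^\infty(J_T,G)$ and then uses equivalence of norms there, whereas you mollify and cut off near the endpoints and appeal to the same ``uniformly bounded operators converging strongly converge uniformly on compacts'' fact together with the Lipschitz continuity of $\widehat\RR$ in $\zeta$ --- a technically equivalent variant.
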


\begin{proof}
Let $\varPsi_i:\KK\times\KK\to L^2(J_T,G)$, $i=1,2$ be two mappings satisfying~\eqref{5.9} and~\eqref{5.10}. Since $C_0^\infty(J_T,G)$ is dense in $L^2(J_T,G)$, we can assume that the images of both mappings are contained in a finite-dimensional subspace of $C_0^\infty(J_T,G)$; see Proposition~\ref{p4.2}. It follows that (cf.\ proof of Proposition~\ref{p2.3})
\begin{equation} \label{5.11}
\widehat\RR\bigl(u_0,h+\varPsi_1(y),\varPsi_2(y)\bigr)
+\varPsi_2(y)
=\RR\bigl(u_0,h+\varPsi_1(y)+\p_t\varPsi_2(y)\bigr),
\end{equation}
where we set $y=(u_0,\hat u)$. Since all the norms on a finite-dimensional space are equivalent,  the mapping
$$
\varPsi:\KK\times\KK\to L^2(J_T,G), \quad y\mapsto \varPsi_1(y)+\p_t\varPsi_2(y),
$$
is continuous, and its image is contained in $C_0^\infty(J_T,G)$. Finally, combining~\eqref{5.10} and~\eqref{5.11}, we conclude that~\eqref{5.5} also holds. The proof is complete. 
\end{proof}

We now turn to a parameter version of the convexification principle. 

\begin{proposition} \label{p3.7}
Under the hypotheses of Proposition~\ref{p2.4}, Eq.~\eqref{2.3} is $(\e,\KK)$-con\-trollable by $G$-valued controls if and
only if Eq.~\eqref{2.1}  is $(\e,\KK)$-controllable by an $\FF(N,G)$-valued control.
\end{proposition}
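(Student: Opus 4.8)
The plan is to mirror the proof of Proposition~\ref{p2.4}, carrying the parameter $y=(u_0,\hat u)\in\KK\times\KK$ through every construction and replacing the oscillation argument of Steps~3--4 by the uniform approximation result of Proposition~\ref{p5.6}. The easy implication is immediate: if Eq.~\eqref{2.3} is $(\e,\KK)$-controllable by $G$-valued controls, then by the parameter extension principle (Proposition~\ref{p3.6}) Eq.~\eqref{2.1} is $(\e,\KK)$-controllable by a $G$-valued control, and since $G\subset\FF(N,G)$ the same mapping witnesses $(\e,\KK)$-controllability by an $\FF(N,G)$-valued control; thus all the content lies in the converse.

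For the converse, I would start from a continuous $\varPhi:\KK\times\KK\to L^2(J_T,\FF(N,G))$ with $C^\infty$-in-$t$ range realising $(\e,\KK)$-controllability of Eq.~\eqref{2.1}, so that $\sup_y\|\RR_T(u_0,h+\varPhi(y))-\hat u\|<\e$. First I would reduce to the case in which $\eta_1(y):=\varPhi(y)$ is piecewise constant in time on a $y$-independent partition $0=\tau_0<\cdots<\tau_M=T$: letting $P_M$ denote the orthogonal projection of $L^2(J_T,\FF(N,G))$ onto the functions constant on each subinterval, one has $P_M\to\mathrm{Id}$ strongly, and since $\varPhi(\KK\times\KK)$ is compact the convergence $\sup_y\|P_M\varPhi(y)-\varPhi(y)\|\to0$ is uniform; the Lipschitz continuity of~$\RR_T$ (Remark~\ref{r1.2}) then shows that $P_M\varPhi$ still realises $(\e,\KK)$-controllability for $M$ large. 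Hence I may assume $\eta_1(y)$ is piecewise constant in~$t$ and continuous in~$y$.

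The heart of the argument is a parameter version of Lemma~\ref{l5.1}. On each subinterval I would represent the constant value $\eta_1(y)\in\FF(N,G)$ as
$$
\eta_1(y)\approx\eta(y)-\sum_{i=1}^q b_i(y)\bigl(B(\zeta^i)-\nu\p_x^2\zeta^i\bigr),
\qquad \sum_{i=1}^q b_i(y)\zeta^i=0,
$$
with a \emph{fixed} family $\zeta^1,\dots,\zeta^q\in G$, nonnegative weights $b_i(y)$ summing to~$1$, and an additive control $\eta(y)\in G$, all depending continuously on~$y$. The decisive point, dictated by the hypotheses of Proposition~\ref{p5.6}, is that the vectors $\zeta^i$ be \emph{fixed}, the whole $y$-dependence being carried by the convex weights $b_i$ and by $\eta$. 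To arrange this I would fix a spanning family $g_l=\eta_l+\xi_l\p_x\xi_l'+\xi_l'\p_x\xi_l$ of $\FF(N,G)$ of the form~\eqref{2.6}, expand $\eta_1(y)=\sum_l c_l(y)g_l$ with $c_l$ continuous in~$y$, and use the identity $\xi_l\p_x\xi_l'+\xi_l'\p_x\xi_l=B(\xi_l\pm\xi_l')-B(\xi_l)-B(\xi_l')$, choosing the sign to match that of $c_l$, so as to express each generator modulo $G$ through $B$-values of the fixed vectors $\xi_l\pm\xi_l'$, $\xi_l$, $\xi_l'$. Pairing each vector with its antipode (which makes $\sum_i b_i(y)\zeta^i=0$ hold automatically, since $B$ is even) and inserting a large common scaling, I would bring the bounded family of targets inside the interior of the convex hull of these fixed $B$-values, thereby obtaining continuous barycentric weights $b_i(y)\ge0$; the remaining $G$-valued discrepancy is absorbed into $\eta(y)$ and the defect made $\le\delta$ uniformly in~$y$. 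With these data, Eq.~\eqref{5.42} with $f=h+\eta(y)$ becomes a Burgers-type equation whose solution $\sS(y,u_0,h+\eta(y))$ lies within~$\e$ of $\RR_T(u_0,h+\eta_1(y))$, hence of~$\hat u$, uniformly in~$y$, by Remark~\ref{r1.3} together with the defect bound.

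Finally I would apply Proposition~\ref{p5.6}, interval by interval and glued along the fixed partition, to obtain a continuous control $\varPsi(y)$ with range in $\{\zeta^1,\dots,\zeta^q\}$ such that $\widehat\RR(u_0,h+\eta(y),\varPsi(y))$ approximates $\sS(y,u_0,h+\eta(y))$ uniformly in $y,u_0,f$ to within any prescribed accuracy. Setting $\varPsi_1(y)=\eta(y)$ and $\varPsi_2(y)=\varPsi(y)$ and choosing the accuracies small relative to the margins of the previous steps then yields~\eqref{5.10}. The smoothness~\eqref{5.9} is recovered exactly as in the proof of Proposition~\ref{p3.6}: the piecewise constant controls are approximated, continuously in~$y$, by functions of a fixed finite-dimensional subspace of $C_0^\infty(J_T,G)$ (in $L^2(J_T,G)$ for $\varPsi_1$ and in $L^2(J_T,H^2)$ for $\varPsi_2$) via Proposition~\ref{p4.2}, the Lipschitz continuity of~$\widehat\RR_T$ from Remark~\ref{r1.3} ensuring that~\eqref{5.10} survives. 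The main obstacle is precisely this parametric form of Lemma~\ref{l5.1}: performing the convexification with \emph{fixed} vectors $\zeta^i$ and continuous convex weights subject to $\sum_i b_i(y)\zeta^i=0$, while keeping all data continuous in~$y$ and the estimates uniform over the compact set~$\KK$.
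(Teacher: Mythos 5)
Your overall architecture is the right one --- reduce to controls that are piecewise constant in $t$, treat one interval of constancy at a time, use Proposition~\ref{p5.6} as the engine for the uniform oscillation step, glue along the partition, and smooth with Proposition~\ref{p4.2} --- and the easy implication is handled correctly. The gap is precisely where you locate the main obstacle: the ``parametric form of Lemma~\ref{l5.1}''. You expand $\eta_1(y)=\sum_lc_l(y)g_l$ in a fixed spanning family with \emph{signed} continuous coefficients and then propose to recover nonnegative weights by bringing the targets inside the convex hull of finitely many fixed values $B(\zeta^i)$ after a large scaling. This cannot work as stated. In the polarization identity $\xi\p_x\xi'+\xi'\p_x\xi=B(\xi\pm\xi')-B(\xi)-B(\xi')$ the two terms that cannot be absorbed into $G$, namely $B(\xi\pm\xi')$ and $B(\xi)$, always enter with opposite signs, and the set $\{B(\zeta):\zeta\in G\}$ does not positively span the directions you need: for $G=E_k$ and $\zeta=\sum_{j\le k}a_j\sin jx$ the coefficient of $\sin 2kx$ in $B(\zeta)$ equals $\tfrac{k}{2}a_k^2\ge0$, so $0$ is never an interior point of $\co\{B(\zeta^i)\}$ in that direction, and a common scaling $\zeta\mapsto t\zeta$ only multiplies the hull by $t^2>0$. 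The actual mechanism in Lemma~\ref{l5.1} is the anisotropic rescaling $\xi\p_x\xi'+\xi'\p_x\xi=B(\e\xi+\e^{-1}\xi')-\e^2B(\xi)-\e^{-2}B(\xi')$: the wrong-sign term $\e^2B(\xi)$ is not represented at all but relegated to the $\delta$-defect, while $\e^{-2}B(\xi')\in B(N)\subset G$ is absorbed into the additive control. Your sketch drops this rescaling, and without it the sign obstruction is genuine.

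The paper avoids a parametric Lemma~\ref{l5.1} altogether, and this is the ingredient you are missing. By Proposition~\ref{p4.1}, the piecewise constant control can be put in the special form $\widehat\varPsi(y)=\sum_{r,l}c_{lr}(y)I_{r,s}(t)\eta^l$ with \emph{fixed} vectors $\eta^l\in\FF(N,G)$ and \emph{convex} continuous weights ($c_{lr}\ge0$, $\sum_lc_{lr}\equiv1$); the positive/negative splitting of the coordinates is done there, once and for all, at the level of the control itself. One then applies the non-parametric Lemma~\ref{l5.1} to each fixed $\eta^l$ separately, obtaining fixed $\alpha_{jl}$, $\xi^l$, $\zeta^{jl}$, and takes the convex combination of the resulting inequalities~\eqref{5.15} with the weights $c_l(y)$: since $\sum_lc_l(y)=1$ the $B(u)$ terms recombine, the new weights $b_i(y)=c_l(y)\alpha_{jl}$ are automatically continuous, nonnegative and sum to one, and the vectors $\zeta^{jl}$ are fixed --- exactly the input required by Proposition~\ref{p5.6}. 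Your reduction to piecewise constant controls by $L^2$-projection does not produce this convex structure, and your subsequent construction does not succeed in manufacturing it; the condition $\sum_ib_i(y)\zeta^i=0$ and the antipodal pairing you invoke are correct but are already part of the unparametrized Lemma~\ref{l5.1}, not the missing step.
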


\begin{proof}
We repeat essentially the scheme used to prove Proposition~\ref{p2.5}. The main point is to follow the dependence of all the objects on the initial and target functions~$u_0$ and~$\hat u$. 

\medskip
{\it Step 1}. To simplify notation, set $G_1=\FF(N,G)$, $\CC=\KK\times\KK$, and $y=(u_0,\hat u)$. Let us assume that $\varPsi:\CC \to L^2(J_T,G_1)$ is a continuous mapping satisfying~\eqref{5.4} with $E=G_1$ and~\eqref{5.5}. By Proposition~\ref{p4.1} and continuity of the resolving operator~$\RR$, we can construct a continuous  function $\widehat\varPsi:\CC \to L^2(J_T,G_1)$ that satisfies~\eqref{5.5} and has the form
\begin{equation} \label{5.13}
\widehat\varPsi(y)
=\sum_{r=1}^{s}\sum_{l=1}^{L}c_{lr}(y)I_{r,s}(t)\eta^l,
\end{equation}
where $L=2\dim G_1$, $\eta^1,\dots,\eta^L\in G_1$ are some vectors, and $c_{lr}:\CC \to\R$ are non-negative continuous functions such that 
$$
\sum_{l=1}^Lc_{lr}(y)\equiv 1\quad\mbox{for $r=1,\dots,s$}. 
$$
We shall prove that, given any~$\sigma>0$, one can find continuous mappings $\varPsi_i^\sigma:\CC \to L^2(J_T,G)$, $i=1,2$ such that 
\begin{equation} \label{5.12}
\sup_{y\in\CC}\bigl\|\RR_T\bigl(u_0,h+\widehat\varPsi(y)\bigr)
-\widehat\RR_T\bigl(u_0,h+\varPsi_1^\sigma(y),\varPsi_2^\sigma(y)\bigr)\bigr\|\le\sigma.
\end{equation}
Once this property is proved, for a sufficiently small $\sigma>0$ we shall have 
$$
\sup_{y\in\CC}\,\bigl\|
\widehat\RR_T\bigl(u_0,h+\varPsi_1^\sigma(y),\varPsi_2^\sigma(y)\bigr)
-\hat u\bigr\| <\e.
$$
Finally, using Proposition~\ref{p4.2}, we can find continuous functions $\varPsi_1,\varPsi_2$ from~$\CC $ to a finite-dimensional subspace of $C_0^\infty(J_T,G)$ such that~\eqref{5.10} holds. Thus, it suffices to prove~\eqref{5.12}. 

\smallskip
{\it Step 2}.
We first assume that $s=1$, that is, there is only one interval of constancy. In this case, we can rewrite~\eqref{5.13} as
\begin{equation} \label{5.27}
\widehat\varPsi(y)
=\sum_{l=1}^{L}c_{l}(y)\eta^l.
\end{equation}
Applying Lemma~\ref{l5.1} to the functions~$\eta^l$, for any $\delta>0$ we can find numbers $\alpha_{jl}\ge0$ and vectors $\xi^l,\zeta^{jl}\in G$ such that (cf.~\eqref{45}, \eqref{46})
\begin{align}
\sum_{j=1}^k\alpha_{jl}&=1,\label{5.14}\\
\Bigl\|\eta^l-B(u)-\Bigl(\xi^l-\sum_{j=1}^k\alpha_{jl}\bigl(B(u+\zeta^{jl})-\nu\p_x^2\zeta^{jl}\bigr)\Bigr)\Bigr\|
&\le\delta\quad\mbox{for any $u\in H^1$},\label{5.15}
\end{align}
where $l=1,\dots,L$. 
Consider the equation
\begin{equation} \label{5.16}
\p_t u-\nu \p_x^2u+\sum_{j=1}^k\sum_{l=1}^L
\alpha_{jl}c_l(y)\bigl(B(u+\zeta^{jl})-\nu\p_x^2\zeta^{jl}\bigr)=h+\xi,
\end{equation}
where we set 
\begin{equation} \label{5.25}
\xi=\xi(x;y)=\sum_{l=1}^L c_l(y)\xi^l(x).
\end{equation}
Indexing the pairs $(j,l)$ by a single sequence $i=1,\dots,q$, we rewrite~\eqref{5.16} as
\begin{equation} \label{5.17}
\p_t u-\nu \p_x^2u+\sum_{i=1}^q
b_i(y)\bigl(B(u+\zeta^i(x))-\nu\p_x^2\zeta^i(x)\bigr)=h(t,x)+\xi(x;y),
\end{equation}
where $b_i$ are non-negative continuous functions whose sum is equal to~$1$. Equation~\eqref{5.17} has a unique solution $\tilde u=\tilde u(t;y)$ in~$\XX_T$  issued from~$u_0\in\KK$. On the other hand, we can rewrite~\eqref{5.17} in the form (cf.~\eqref{47})
\begin{equation} \label{5.18}
\p_t u-\nu \p_x^2u+u\p_xu=h(t,x)+\widehat\varPsi(y)-r_\delta(t,x;y),
\end{equation}
where $r_\delta$ is defined by
$$
r_\delta(y)=\widehat\varPsi(y)-B(\tilde u)
-\biggl(\xi(y)-\sum_{i=1}^q
b_i(y)\bigl(B(\tilde u+\zeta^i)-\nu\p_x^2\zeta^i\bigr)\biggr).
$$
Note that, in view of~\eqref{5.15}, we have 
$$
\sup_{y\in\CC}\|r_\delta(t;y)\|\le L\delta.
$$
Combining this with the Lipschitz continuity of~$\RR_T$ on bounded subsets, we see that
\begin{align*}
&\sup_{y\in\CC}\,
\bigl\|\RR_T(u_0,h+\widehat\varPsi(y))-\tilde u(T;y)\bigr\|\\
&\qquad=\sup_{y\in\CC}\,
\bigl\|\RR_T(u_0,h+\widehat\varPsi(y))-\RR_T(u_0,h+\widehat\varPsi(y)
-r_\delta(y))\bigr\|\\
&\qquad
\le C\sup_{y\in\CC}\,\bigl\|r_\delta(y)\bigr\|_{L^1(J_T,L^2)}
\le CTL\,\delta. 
\end{align*}
Recalling now inequality~\eqref{5.5} with~$\varPsi$ replaced by~$\widehat\varPsi$, we conclude that if~$\delta>0$ is sufficiently small, then 
$$
\sup_{y\in\CC}\,
\bigl\|\tilde u(T;y)-\hat u\bigr\|<\e. 
$$
Thus, to prove~\eqref{5.12} for $s=1$, it suffices to construct, for any given~$\sigma>0$, a  continuous mapping $\varPsi_2^\sigma:\CC \to L^2(J_T,G)$ such that 
\begin{equation} \label{5.19}
\sup_{y\in\CC }\,
\bigl\|\RR_T\bigl(u_0,h+\xi(y),\varPsi_2^\sigma(y)\bigr)-\tilde u(T;y)\bigr\|
\le\sigma.
\end{equation}
The existence of such a mapping is a straightforward consequence of Proposition~\ref{p5.6}. 

\smallskip
{\it Step 3}.
We now turn to the case $s\ge2$. Let us note that the construction of the previous step implies the following result on approximation of solutions. 

\begin{lemma} \label{l3.8}
Let $J\subset\R$ be a finite interval and let~$(\CC,d_\CC)$ be a compact metric space. Then for any elements $\eta^l\in\FF(N,G)$, $l=1,\dots,L$, any non-negative continuous functions $c_l:\CC\to\R$ whose sum is identically equal to~$1$, and any positive numbers~$\sigma$ and~$R$ there are continuous functions
$$
\varPsi_1:\CC\to G, \quad \varPsi_2:\CC\to L^2(J,G)
$$
and a number~$\delta>0$ such that, for any $u_0,v_0\in B_{L^2}(R)$ and $y\in\CC$ satisfying the inequality $\|u_0-v_0\|\le\delta$, we have
$$
\bigl\|\RR\bigl(u_0,h+\widehat\varPsi(y)\bigr)
-\widehat\RR\bigl(v_0,h+\varPsi_1(y),\varPsi_2(y)\bigr)\bigr\|_{\XX(J)}\le\sigma,
$$
where $\widehat\varPsi(y)$ is defined by~\eqref{5.27}, and with a slight abuse of notation we denote by~$\RR$ and~$\widehat\RR$ the resolving operators for~\eqref{2.1} and~\eqref{2.3} on the interval~$J$.  
\end{lemma}

Let us set $J_r=[t_{r-1},t_{r}]$, $r=1,\dots,s$, and define the restrictions of the required mappings~$\varPsi_1^\sigma$ and~$\varPsi_2^\sigma$ to~$J_r$ consecutively from~$r=s$ to~$r=1$. Namely, let positive numbers~$\e_s$ and~$R$ be such that 
$$
\e_s+\sup_{y\in\CC}
\bigl\|\RR_T\bigl(u_0,h+\widehat\varPsi(y)\bigr)-\hat u\bigr\|<\e,\quad
\sup_{y\in\CC}\,\bigl\|\RR\bigl(u_0,h+\varPsi(y)\bigr)\bigr\|\le R-1. 
$$
If $\e_{r}>0$ is constructed for some integer $r\in[2,s]$, we apply Lemma~\ref{l3.8} with $J=J_r$, $\sigma=\e_{r}$, and the above choice of~$R$ to find mappings 
$$
\varPsi_1^\sigma(r,\cdot):\CC\to G,\quad 
\varPsi_2^\sigma(r,\cdot):\CC\to L^2(J_r,G)
$$
and a number $\delta\in(0,1)$ such that, for any $v_0\in L^2$ satisfying the inequality $\|v_0-\RR_{t_{r-1}}(u_0,h+\widehat\varPsi(y))\|\le\delta$, we have 
$$
\sup_{y\in\CC}\,\bigl\|\RR\bigl(u_0,h+\widehat\varPsi(y)\bigr)
-\widehat\RR\bigl(v_0,h+\varPsi_1(r;y),\varPsi_2(r;y)\bigr)\bigr\|_{\XX(J_r)}
\le\e_r. 
$$
Setting $\e_{r-1}=\delta$, we can continue the construction up to $r=1$. 
We now define the required mappings by the relation
$$
\varPsi_1^\sigma(y)\big|_{J_r}=\varPsi_1^\sigma(r;y), \quad
\varPsi_2^\sigma(y)\big|_{J_r}=\varPsi_2^\sigma(r;y), \quad y\in\CC,\quad
r=1,\dots,s.
$$
It is easy to see that the constructed mappings satisfy the required inequality~\eqref{5.12}. 
\end{proof}

\subsection{Completion of the proof of Theorem~\ref{t3.5}}
\label{s3.4}
Propositions~\ref{p3.6} and~\ref{p3.7} combined with Lemma~\ref{l2.7} imply that Eq.~\eqref{2.1} is $(\e,\KK)$-controllable by an $E$-valued control if and only if it is $(\e,\KK)$-controllable by an $E_N$-valued control, where the spaces~$E_k$ are defined after Proposition~\ref{p2.5}. Thus, the proof of Theorem~\ref{t3.5} will be complete if we establish the latter property with a large $N\ge2$. 

\smallskip
Let $u_\mu=u_\mu(u_0,\hat u)$ and~$\eta_\mu=\eta_\mu(u_0,\hat u)$ be the functions defined in Section~\ref{s2.5}. Then~$\eta_\mu$ maps continuously~$\KK\times\KK$ to $L^2(J_T,L^2)$ and has the property that 
$$
\sup_{u_0,\hat u\in\KK}\|u_\mu(T)-\hat u\|
=\sup_{u_0,\hat u\in\KK}\|\RR_T(u_0,h+\eta_\mu(u_0,\hat u))-\hat u\|
\to0\quad\mbox{as $\mu\to0$}. 
$$
Using the density of $C^\infty(J_T,L^2)$ in the space $L^2(J_T,L^2)$ and applying Proposition~\ref{p4.2}, for any $\e>0$ we can find a continuous function $\tilde \eta:\KK\times\KK\to L^2(J_T,L^2)$ whose image is contained in a finite-dimensional subspace of $C^\infty(J_T,L^2)$ such that 
$$
\sup_{u_0,\hat u\in\KK}\|\RR_T(u_0,h+\tilde \eta(u_0,\hat u))-\hat u\|<\e. 
$$
The required mapping $\varPsi:\KK\times\KK\to L^2(J_T,E_N)$ can now be constructed by repeating literally the argument used in Section~\ref{s2.5}. 

\section{Appendix} 
\label{s4}

\subsection{Approximation of functions valued in a Hilbert space}
The following simple result implies, in particular, that when dealing with the property of uniform approximate controllability, one can always assume that the image of the corresponding control operator lies in a finite-dimensional subspace. 

\begin{proposition} \label{p4.2}
Let~$\CC$ be a compact metric space, let $H$ be a separable Hilbert space, and let $\varPsi:\CC\to H$ be a continuous mapping. Then, for any dense subspace $H_0\subset H$ and any $\delta>0$, there is a finite-dimensional subspace $H_\delta\subset H_0$ and a continuous function $\varPsi_\delta:\CC\to H$ whose image is contained in~$H_\delta$ such that
\begin{equation} \label{4.11}
\sup_{y\in\CC}\|\varPsi(y)-\varPsi_\delta(y)\|_H<\delta. 
\end{equation}
\end{proposition}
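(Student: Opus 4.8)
The plan is to combine the compactness of the image $\varPsi(\CC)$ with the density of $H_0$ to produce the finite-dimensional subspace, and then to obtain the continuous approximant by orthogonal projection onto it. First I would observe that, since $\CC$ is compact and $\varPsi$ is continuous, the image $K=\varPsi(\CC)$ is a compact subset of $H$. Covering $K$ by finitely many open balls of radius $\delta/3$, I obtain points $y_1,\dots,y_n\in\CC$ such that the open sets $U_i=\{y\in\CC:\|\varPsi(y)-\varPsi(y_i)\|_H<\delta/3\}$ cover $\CC$.

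Next, using the density of $H_0$ in $H$, I would choose vectors $e_1,\dots,e_n\in H_0$ with $\|\varPsi(y_i)-e_i\|_H<\delta/3$ for each $i$, and set $H_\delta=\linspan\{e_1,\dots,e_n\}$, which is a finite-dimensional subspace of $H_0$. For any $y\in\CC$ there is an index $i$ with $y\in U_i$, and then $\|\varPsi(y)-e_i\|_H\le\|\varPsi(y)-\varPsi(y_i)\|_H+\|\varPsi(y_i)-e_i\|_H<2\delta/3$; since $e_i\in H_\delta$, this gives the uniform bound $\dist(\varPsi(y),H_\delta)<2\delta/3$.

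Finally, I would let $P_\delta\in\LL(H,H)$ be the orthogonal projection onto the finite-dimensional subspace $H_\delta$ and define $\varPsi_\delta=P_\delta\circ\varPsi$. As $P_\delta$ is a bounded linear operator and $\varPsi$ is continuous, the composition $\varPsi_\delta$ is continuous, and its image lies in $H_\delta$ by construction. The characterisation of the orthogonal projection as the nearest-point map yields $\|\varPsi(y)-\varPsi_\delta(y)\|_H=\dist(\varPsi(y),H_\delta)$, so the uniform estimate from the previous step gives $\sup_{y\in\CC}\|\varPsi(y)-\varPsi_\delta(y)\|_H\le 2\delta/3<\delta$, which is exactly~\eqref{4.11}.

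The only point that genuinely requires care is the continuity of the finite-dimensional approximant: a naive choice assigning to each $y$ the nearby vector $e_i$ would be discontinuous across the boundaries of the $U_i$. The orthogonal-projection device sidesteps this at once, since $P_\delta$ is globally linear and bounded; an alternative would be to glue the $e_i$ with a continuous partition of unity subordinate to $\{U_i\}$, but that route forces a separate error estimate for the convex combination. I do not expect any deeper difficulty, as the statement is a soft consequence of compactness, density, and the boundedness of finite-rank projections.
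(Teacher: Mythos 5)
Your proof is correct and rests on the same mechanism as the paper's: compactness of $\varPsi(\CC)$ together with orthogonal projection onto a finite-dimensional subspace of $H_0$, so that continuity of $\varPsi_\delta$ comes for free from the boundedness of the projection. The only difference is in how the subspace is produced: the paper fixes an exhausting sequence $H^n$ of finite-dimensional subspaces of $H_0$ and invokes the fact that strongly convergent orthogonal projections converge uniformly on compact sets, whereas you build $H_\delta$ directly from a finite $\delta/3$-net of the image, which is a slightly more self-contained route to the same uniform distance estimate.
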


\begin{proof}
Let $H^n$ be an increasing sequence of finite-dimensional subspaces such that $\cup_nH^n$ is dense in~$H_0$ and, hence, in~$H$. We denote by~$P_n$ the orthogonal projections in~$H$ onto the subspace~$H^n$. Then the sequence~$\{P_n\}$ converges to the identity  in the strong operator topology. It is well known that, in this case, $P_nu\to u$ as $n\to\infty$ uniformly with respect to~$u$ varying in a compact subset of~$H$. It follows that 
$$
\sup_{y\in\CC}\|\varPsi(y)-P_n\varPsi(y)\|_H\to0\quad\mbox{as $n\to\infty$}. 
$$
We see that, for any $\delta>0$ and a sufficiently large integer $n=n(\delta)$, the function $\varPsi_{\delta}(y)=P_{n(\delta)}\varPsi(y)$ satisfies the required property. 
\end{proof}

\subsection{Approximation by piecewise constant functions}
Let us fix $T>0$. For given integers $s\ge1$ and $r\in[1,s]$, we denote $t_r=rT/s$ and write~$I_{r,s}(t)$ for the indicator function of the interval $[t_{r-1},t_r)$. The following proposition shows that one can approximate square-integrable functions depending on a parameter by piecewise constant functions of a special form. 

\begin{proposition} \label{p4.1}
Let~$\CC$ be a compact metric space, let $G$ be a $d$-dimensional vector space, and let $\eta:\CC\to L^2(J_T,G)$ be a continuous function. Then for any basis $e_1,\dots,e_d$ of~$G$ the function~$\eta$ can be approximated, within any accuracy, by functions of the form
\begin{equation} \label{4.1}
\zeta(y)=\sum_{r=1}^{s}\sum_{l=1}^{2d}c_{lr}(y)I_{r,s}(t)\eta^l,
\end{equation}
where $c_{lr}:\CC\to \R$ are non-negative continuous functions such that
\begin{equation} \label{4.2}
\sum_{l=1}^{2d} c_{lr}(y)\equiv1\quad\mbox{for any $r=1,\dots,s$}, 
\end{equation}
$\eta^l=Ce_l$ for $1\le l\le d$, $\eta^l=-Ce_{l-d}$ for $d+1\le l\le 2d$, and $C>0$ is a number. 
\end{proposition}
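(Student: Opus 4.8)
The plan is to approximate in two independent stages: first discretise the time variable, replacing $\eta(y)$ by its average over each subinterval of the uniform partition $\{t_r\}$, and then, on each subinterval, rewrite the resulting $G$-valued coefficient as a convex combination of the $2d$ vectors $\pm Ce_l$. The first stage produces a function that is piecewise constant in~$t$ and continuous in~$y$, with approximation error controlled uniformly over~$\CC$; the second stage recasts this function in the required form~\eqref{4.1} without changing it at all.

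For the first stage, fix an integer $s\ge1$ and let $E_s:L^2(J_T,G)\to L^2(J_T,G)$ be the orthogonal projection onto the subspace of functions constant on each interval $[t_{r-1},t_r)$; explicitly, $(E_sg)(t)=g_r$ for $t\in[t_{r-1},t_r)$, where $g_r=\frac{s}{T}\int_{t_{r-1}}^{t_r}g(\tau)\,d\tau$. As $s\to\infty$, the operators~$E_s$ converge to the identity in the strong operator topology, so the compactness of $\eta(\CC)$ together with the argument used in the proof of Proposition~\ref{p4.2} yields
$$
\sup_{y\in\CC}\|\eta(y)-E_s\eta(y)\|_{L^2(J_T,G)}\to0\quad\mbox{as $s\to\infty$}.
$$
Writing $E_s\eta(y)=\sum_{r=1}^s I_{r,s}(t)\,g_r(y)$, where $g_r(y)\in G$ is the average of $\eta(y)$ over $[t_{r-1},t_r)$, I note that each $g_r:\CC\to G$ is continuous, since averaging is a bounded operator and~$\eta$ is continuous into $L^2(J_T,G)$. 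Given the target accuracy, I would fix~$s$ so large that the displayed supremum is smaller than it.

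For the second stage, expand $g_r(y)=\sum_{l=1}^d a_{lr}(y)\,e_l$ in the given basis; the coefficients $a_{lr}:\CC\to\R$ are continuous. Since~$\CC$ is compact and there are finitely many indices~$r$, I may choose a single number $C>0$ with $\sum_{l=1}^d|a_{lr}(y)|\le C$ for all~$r$ and all $y\in\CC$, and set $b_{lr}=a_{lr}/C$, so that $\sum_{l=1}^d|b_{lr}(y)|\le1$. Now define
$$
c_{lr}=b_{lr}^{+}+\tfrac{1}{2d}\rho_r,\qquad
c_{l+d,r}=b_{lr}^{-}+\tfrac{1}{2d}\rho_r\quad(1\le l\le d),
$$
where $b_{lr}^{\pm}=\max(\pm b_{lr},0)$ and $\rho_r(y)=1-\sum_{l=1}^d|b_{lr}(y)|\ge0$. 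These functions are non-negative and continuous, their sum over $l=1,\dots,2d$ equals $\sum_{l=1}^d|b_{lr}|+\rho_r=1$, which is~\eqref{4.2}, and, recalling $\eta^l=Ce_l$, $\eta^{l+d}=-Ce_l$, one has $\sum_{l=1}^{2d}c_{lr}\eta^l=C\sum_{l=1}^d(c_{lr}-c_{l+d,r})e_l=C\sum_{l=1}^d b_{lr}e_l=g_r(y)$. Hence the function~$\zeta$ given by~\eqref{4.1} with these coefficients coincides with $E_s\eta$ and is therefore within the prescribed accuracy of~$\eta$.

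The only genuinely delicate point is the uniform-in-$y$ control in the first stage: the time-discretisation error must tend to zero simultaneously for all parameters, which is why I invoke the compactness of $\eta(\CC)$ together with the strong convergence $E_s\to\mathrm{Id}$, exactly as in Proposition~\ref{p4.2}. The second stage is purely algebraic: the vectors $\pm Ce_l$ have the scaled $\ell^1$-ball in the $e$-coordinates as their convex hull, and the decomposition into positive and negative parts delivers the continuity and non-negativity of the coefficients automatically, with the slack~$\rho_r$ absorbing the normalisation constraint $\sum_l c_{lr}=1$.
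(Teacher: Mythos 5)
Your proof is correct, and it uses the same two ingredients as the paper --- a time discretisation and an exact convex decomposition over the vectors $\pm Ce_l$ --- but assembles them in the opposite order and with different normalisations. The paper first reduces to a continuous $\eta$ via Proposition~\ref{p4.2}, then writes $\eta(y;t)=\sum_l\varphi_l(y;t)e_l$ and performs the convex decomposition at the level of the $t$-dependent coefficients, using the affine shift $\psi_l=(\varphi_l\pm M)/(2C)$ with $C=Md$ (an $\ell^\infty$-type normalisation of the coordinates), and only at the very end approximates the $\psi_l$ by piecewise constant functions $\sum_r c_r(y)I_{r,s}(t)$. You instead discretise in $t$ first, via the averaging projections $E_s$, getting uniformity in $y$ from the compactness of $\eta(\CC)$ and the strong convergence $E_s\to\mathrm{Id}$ (the same device as in Proposition~\ref{p4.2}), and then decompose each constant block $g_r(y)$ exactly, normalising by the $\ell^1$-norm of its coordinates and splitting into positive and negative parts with the slack $\rho_r$ spread evenly to enforce $\sum_l c_{lr}=1$. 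Your ordering has the advantage that the convex decomposition is performed only on genuinely piecewise constant data, so the final approximation step of the paper --- where one must approximate the $\psi_l$ by piecewise constants while preserving non-negativity and the sum-to-one constraint, a point the paper treats rather tersely --- disappears entirely; the price is that you must verify the continuity of the block averages $g_r$ and of the positive/negative parts $b_{lr}^\pm$, which you do. Both arguments are complete and of comparable length.
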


\begin{proof}
We wish to prove that, for any $\e>0$, there is a function $\zeta:\CC\to L^2(J_T,G)$ of the form~\eqref{4.1} such that 
\begin{equation*} \label{4.3}
\sup_{y\in\CC}\|\eta(y)-\zeta(y)\|_{L^2(J_T,G)}<\e. 
\end{equation*}
In view of Proposition~\ref{p4.2}, since $C(J_T,G)$ is dense in $L^2(J_T,G)$, there is no loos of generality in assuming that~$\eta$ is a continuous function from~$\CC$ to a finite-dimensional subspace of $C(J_T,G)$. 

Let us introduce a scalar product~$(\cdot,\cdot)$ in~$G$ for which $\{e_l\}$ is an orthonormal basis. Then~$\eta$ can be written in the form
\begin{equation} \label{4.4}
\eta(y)=\eta(y;t)=\sum_{l=1}^d\varphi_l(y;t)e_l,
\end{equation}
where $\varphi_l(y;t)=(\eta(y;t),e_l)$. Note that $\varphi_l$ is a real-valued continuous function on~$\CC\times J_T$. Let us set 
$$
M=\max_{l,y,t}|\varphi_l(y;t)|,\quad C=Md,
$$
where the maximum is taken over $l=1,\dots,d$ and $(y;t)\in\CC\times J_T$. Then~\eqref{4.4} can be rewritten as 
$$
\eta(y;t)=\sum_{l=1}^d\frac{\varphi_l(y;t)+M}{2C}\,\eta^l
+\sum_{l=1}^d\frac{M-\varphi_l(y;t)}{2C}\,\eta^{l+d}
=\sum_{l=1}^{2d}\psi_l(y;t)\eta^l,
$$
where $\psi_l:\CC\times J_T\to\R$ are non-negative continuous functions whose sum is identically equal to~$1$. It remains to note that~$\psi_l$ can be approximated, within any accuracy, by piecewise constant functions of the form $\sum_r c_r(y)I_{r,s}(t)$. 
\end{proof}

\addcontentsline{toc}{section}{Bibliography}
\def\cprime{$'$} \def\cprime{$'$} \def\cprime{$'$}
\providecommand{\bysame}{\leavevmode\hbox to3em{\hrulefill}\thinspace}
\providecommand{\MR}{\relax\ifhmode\unskip\space\fi MR }
\providecommand{\MRhref}[2]{%
  \href{http://www.ams.org/mathscinet-getitem?mr=#1}{#2}
}
\providecommand{\href}[2]{#2}

\end{document}